\newtheorem{theorem}{Theorem}[section]
\newtheorem{lemma}[theorem]{Lemma}
\newtheorem{proposition}[theorem]{Proposition}
\newtheorem{corollary}[theorem]{Corollary}
\theoremstyle{definition}
\newtheorem{example}[theorem]{Example}
\theoremstyle{remark}
\newtheorem{remark}[theorem]{Remark}
\begin{document}
\title[Fuglede--Putnam type theorems]{Fuglede--Putnam type theorems via the Aluthge transform}
\author[M.S. Moslehian, S.M.S. Nabavi Sales]{M. S. Moslehian$^1$ and S. M. S. Nabavi Sales$^2$}
\address{$^1$ Department of Pure Mathematics, Center of Excellence in
Analysis on Algebraic Structures (CEAAS), Ferdowsi University of
Mashhad, P. O. Box 1159, Mashhad 91775, Iran.}
\email{moslehian@ferdowsi.um.ac.ir}
\urladdr{\url{http://profsite.um.ac.ir/~moslehian/}}
\address{$^{2}$ Department of Pure Mathematics, Ferdowsi University of
Mashhad, P. O. Box 1159, Mashhad 91775, Iran.}
\email{sadegh.nabavi@gmail.com}

\subjclass[2010]{Primary 47B20; Secondary 47B15, 47A30.}

\keywords{Fuglede--Putnam theorem; Aluthge transform; polar decomposition; normal operator; Schatten $p$-norm; norm inequality.}

\begin{abstract}
Let $A=U|A|$ and $B=V|B|$ be the polar decompositions of $A\in
\mathbb{B}(\mathscr{H}_1)$ and $B\in \mathbb{B}(\mathscr{H}_2)$ and
let $\mbox{Com}(A,B)$ stand for the set of operators
$X\in\mathbb{B}(\mathscr{H}_2,\mathscr{H}_1)$ such that $AX=XB$. A
pair $(A,B)$ is said to have the FP-property if
$\mbox{Com}(A,B)\subseteq\mbox{Com}(A^\ast,B^\ast)$. Let $\tilde{C}$
denote the Aluthge transform of a bounded linear operator $C$. We
show that (i) if $A$ and $B$ are invertible and $(A,B)$ has the
FP-property, then so is $(\tilde{A},\tilde{B})$; (ii) if $A$ and $B$
are invertible, the spectrums of both $U$ and $V$ are contained in
some open semicircle and $(\tilde{A},\tilde{B})$ has the
FP-property, then so is $(A,B)$; (iii) if $(A,B)$ has the
FP-property, then
$\mbox{Com}(A,B)\subseteq\mbox{Com}(\tilde{A},\tilde{B})$, moreover,
if $A$ is invertible, then
$\mbox{Com}(A,B)=\mbox{Com}(\tilde{A},\tilde{B})$. Finally, if
$\mbox{Re}(U|A|^{1\over2})\geq a>0$ and
$\mbox{Re}(V|B|^{1\over2})\geq a>0$ and $X$ is an operator such that
$U^* X=XV$, then we prove that $\|\tilde{A}^* X-X\tilde{B}\|_p\geq
2a\|\,|B|^{1\over2}X-X|B|^{1\over2}\|_p$ for any $1 \leq p \leq
\infty$.
\end{abstract}
\maketitle
\section{Introduction and preliminaries}

Let $\mathbb{B}(\mathscr{H}_1, \mathscr{H}_2)$ be the algebra of all bounded linear
operators between (separable) complex Hilbert spaces $\mathscr{H}_1$ and $\mathscr{H}_2$, let $\mathbb{B}(\mathscr{H})$
denote $\mathbb{B}(\mathscr{H}, \mathscr{H})$ and let $I \in \mathbb{B}(\mathscr{H})$ be the identity operator. A subspace
$\mathscr{K}\subseteq \mathscr{H}$ is said to reduce $A\in\mathbb{B}(\mathscr{H})$ if
$A\mathscr{K}\subseteq \mathscr{K}$ and $A^* \mathscr{K}\subseteq \mathscr{K}$. Let $\mathbb{K}(\mathscr{H})$ denote the two-sided ideal of all compact operators on $\mathscr{H}$. For any
compact operator $A$, let $s_1(A), s_2(A), \ldots$ be the singular
values of $A$, i.e., the eigenvalues of $|A|=(A^* A)^{1\over2}$ in
decreasing order and repeated according to the multiplicity. If
$\sum_{i=1}^\infty s_i(A)^p<\infty$, for some $1\leq p<\infty$, we
say that $A$ is in the Schatten class $\mathcal{C}_p$ and
$\|A\|_p=(\sum_{i=1}^\infty s_i(A)^p)^{1\over p}$ is called the
Schatten $p$-norm of $A$. This norm makes $\mathcal{C}_p$ into a Banach space.
Note that $\mathcal{C}_1$ is the trace class and $\mathcal{C}_2$ is the Hilbert-Schmidt
class. It is convenient to put $\mathcal{C}_\infty=\mathbb{K}(\mathscr{H})$ and to denote the usual
operator norm $\|.\|$ by $\|.\|_\infty$. If $\{e_i\}_{i=1}^\infty$ and
$\{f_i\}_{i=1}^\infty$ are two orthonormal families in $\mathscr{H}$, then for
$A\in \mathcal{C}_p$, $\|A\|_p^p\geq\sum_{i=1}^\infty|\langle
Ae_i,f_i\rangle|^p$. If $A,B\in \mathcal{C}_p$, then
\begin{eqnarray}\label{1new}
\left\|\left(
  \begin{array}{cc}
  0 & A \\
  B & 0 \\
  \end{array}
 \right)\right\|_p^p&=&\|A\|_p^p+\|B\|_p^p
\qquad\quad (0\leq p<\infty)\,,
\nonumber\\
&&\\
\left\|\left(
  \begin{array}{cc}
  0 & A \\
  B & 0 \\
  \end{array}
 \right)\right\|_{\infty}&=&\max (\|A\|_{\infty},\|B\|_{\infty})\,.\nonumber
 \end{eqnarray}
We refer the reader to \cite{ALU2} for further
properties of the Schatten $p$-classes.

For $p>0$, an operator $A$ is called $p$-hyponormal if $(A^*
A)^p\geq (AA^*)^p$. If $A$ is an invertible operator satisfying
$\log(A^* A)\geq \log(AA^*)$, then it is called log-hyponormal. If
$p=1$, then $A$ is said to be hyponormal. If $A$ is invertible and
$p$-hyponormal then $A$ is log-hyponormal.

Let $A=U|A|$ be the polar decomposition of $A$. It is known that if
$A$ is invertible then $U$ is unitary and $|A|$ is also invertible.
The Aluthge transform $\tilde{A}$ of $A$ is defined by
$\tilde{A}:=|A|^{1\over2}U|A|^{1\over2}$. This notion was first
introduced by Aluthge \cite{ALU} and is a powerful tool in the
operator theory. There are some significant evidences for this
assertion, for instance, it is proved in \cite{JKP} that any
operator $A$ has a nontrivial invariant subspace if and only if so
does $\tilde{A}$. Another interesting application deals with an
application of the Aluthge transform for generalizing the
Fuglede--Putnam theorem \cite{JTU}. It indeed is a motivation for
our work in this paper. Let $A\in\mathbb{B}(\mathscr{H}_1)$ and
$B\in\mathbb{B}(\mathscr{H}_2)$. For such pair $(A,B)$, denote by
$\mbox{Com}(A,B)$ the set of operators
$X\in\mathbb{B}(\mathscr{H}_2,\mathscr{H}_1)$ such that $AX=XB$. A
pair $(A,B)$ is said to have the FP-property if
$\mbox{Com}(A,B)\subseteq\mbox{Com}(A^\ast,B^\ast)$. The
Fuglede--Putnam theorem is well-known in the operator theory. It
asserts that for any normal operators $A$ and $B$, the pair $(A,B)$
has the FP-property. First Fuglede \cite{FUG} proved it in the case
when $A=B$ and then Putnam \cite{PUT} proved it in a general case.
There exist many generalizations of this theorem which most of them
go into relaxing the normality of $A$ and $B$; see \cite{FUR2, RAD,
JOC, DUG1, DUG2, MTU, MOR} and references therein. The two next
lemmas are concerned with the Fuglede--Putnam theorem and we need
them in the future.
\begin{lemma}\label{L1}\cite{TAK}
Let $A\in\mathbb{B}(\mathscr{H}_1)$ and
$B\in\mathbb{B}(\mathscr{H}_2)$. Then the following assertions are
equivalent

{\rm(i)} The pair $(A,B)$ has the FP-property.

{\rm (ii)} If $X\in\mbox{Com}(A,B)$, then $\overline{R(X)}$ reduces
$A$, $(\ker X)^\bot$ reduces $B$, and $A|_{\overline{R(X)}}$, $B|_{(\ker
X)^\bot}$ are unitarily equivalent normal operators.
\end{lemma}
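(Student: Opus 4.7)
The plan is to prove the two implications separately. For (ii)$\Rightarrow$(i), hypothesis (ii) supplies, for a given $X\in\mbox{Com}(A,B)$, orthogonal decompositions $\mathscr{H}_1=\overline{R(X)}\oplus\overline{R(X)}^{\perp}$ and $\mathscr{H}_2=(\ker X)^{\perp}\oplus\ker X$ that reduce $A$ and $B$ respectively. Writing $A=A_1\oplus A_2$, $B=B_1\oplus B_2$, and observing that $X$ has the block form with the sole nonzero entry $X_0$ in the upper-left corner, the relation $AX=XB$ collapses to $A_1X_0=X_0B_1$. Since $A_1$ and $B_1$ are normal by hypothesis, the classical Fuglede--Putnam theorem yields $A_1^*X_0=X_0B_1^*$, and reassembling the blocks gives $A^*X=XB^*$. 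Thus $(A,B)$ has the FP-property.

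For (i)$\Rightarrow$(ii), fix $X\in\mbox{Com}(A,B)$. The FP-property supplies $A^*X=XB^*$ in addition to $AX=XB$, from which one checks directly that $\overline{R(X)}$ is invariant under both $A$ and $A^*$, and $\ker X$ is invariant under both $B$ and $B^*$; thus the stated reducing properties hold. Put $A_1=A|_{\overline{R(X)}}$ and $B_1=B|_{(\ker X)^{\perp}}$, and let $X_0\colon(\ker X)^{\perp}\to\overline{R(X)}$ be the restriction of $X$, which is injective with dense range. Take the polar decomposition $X_0=W|X_0|$, in which $W$ is unitary from $(\ker X)^{\perp}$ onto $\overline{R(X)}$. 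Computing $X_0^*A_1X_0$ in two ways, once via $A_1X_0=X_0B_1$ and once via the adjoint relation $X_0^*A_1=B_1X_0^*$ (obtained from $A_1^*X_0=X_0B_1^*$), shows that $|X_0|^2$, and hence $|X_0|$ by functional calculus, commutes with both $B_1$ and $B_1^*$. Substituting $X_0=W|X_0|$ into $A_1X_0=X_0B_1$ and using that $|X_0|$ has dense range in $(\ker X)^{\perp}$ yields $A_1W=WB_1$; the parallel argument using $A_1^*X_0=X_0B_1^*$ gives $A_1^*W=WB_1^*$. Hence $A_1$ and $B_1$ are unitarily equivalent through $W$.

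The principal obstacle is to establish that $A_1$ and $B_1$ are actually normal, not merely unitarily equivalent. For this I exploit that the FP-property holds at every element of $\mbox{Com}(A,B)$, not only at $X$. For any operator $P$ on $\mathscr{H}_2$ commuting with $B$, the product $XP$ again lies in $\mbox{Com}(A,B)$, so the FP-property forces $A^*(XP)=(XP)B^*$; combined with $A^*X=XB^*$, this collapses to $X(B^*P-PB^*)=0$, so that the range of $B^*P-PB^*$ is contained in $\ker X$. Specializing to $P=P_{11}\oplus 0$ with $P_{11}$ any operator on $(\ker X)^{\perp}$ commuting with $B_1$, the $(\ker X)^{\perp}$-component of $B^*P-PB^*$ equals $B_1^*P_{11}-P_{11}B_1^*$, which is therefore forced to vanish. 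Hence every operator commuting with $B_1$ also commutes with $B_1^*$; choosing $P_{11}=B_1$ gives $B_1^*B_1=B_1B_1^*$, so $B_1$ is normal, and $A_1=WB_1W^*$ inherits normality, completing (ii).
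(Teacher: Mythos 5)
Your proof is correct; note that the paper itself offers no proof of this lemma to compare against, since it is quoted verbatim from Takahashi \cite{TAK}, so your write-up supplies an argument the paper omits. The direction (ii)$\Rightarrow$(i) is the routine half: with respect to $\mathscr{H}_1=\overline{R(X)}\oplus \overline{R(X)}^{\perp}$ and $\mathscr{H}_2=(\ker X)^{\perp}\oplus\ker X$ the relation $AX=XB$ collapses to $A_1X_0=X_0B_1$ with $A_1,B_1$ normal, and the classical Fuglede--Putnam theorem finishes it, exactly as you say. In (i)$\Rightarrow$(ii), the reducing-subspace checks and the unitary equivalence via the polar decomposition $X_0=W|X_0|$ are the standard intertwining argument, and they are sound because $X_0$ is injective with dense range, so $W$ is indeed unitary, $|X_0|^2$ commutes with $B_1$ and $B_1^{*}$, and density of $R(|X_0|)$ gives $A_1W=WB_1$. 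The genuinely nontrivial point is normality, which cannot be extracted from the single pair of relations $AX=XB$, $A^{*}X=XB^{*}$, and you correctly exploit that the FP-property holds at \emph{every} element of $\mbox{Com}(A,B)$: for $P=P_{11}\oplus 0$ commuting with $B$, the operator $XP$ lies in $\mbox{Com}(A,B)$, whence $X(B^{*}P-PB^{*})=0$; since $B^{*}P-PB^{*}=(B_1^{*}P_{11}-P_{11}B_1^{*})\oplus 0$ has range inside $(\ker X)^{\perp}\cap\ker X=\{0\}$, taking $P_{11}=B_1$ gives $B_1^{*}B_1=B_1B_1^{*}$, and $A_1=WB_1W^{*}$ inherits normality. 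This commutant trick is a clean and complete way to obtain the hard part of Takahashi's theorem; I see no gaps.
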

\begin{lemma}\label{L3}\cite{JTU}
Let $A\in \mathbb{B}(\mathscr{H}_1)$ and $B^*\in
\mathbb{B}(\mathscr{H}_2)$ be either log-hyponormal or
$p$-hyponormal
 operators. Then the pair $(A,B)$ has the FP-property.
\end{lemma}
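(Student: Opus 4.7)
\emph{Proof proposal.} The plan is to reduce the statement to the classical Fuglede--Putnam theorem in two stages: first lift $p$-hyponormality or log-hyponormality to ordinary hyponormality by iterating the Aluthge transform, and then dispose of the hyponormal case through a Berberian-type spectral extension together with Lemma~\ref{L1}.

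For the first stage, I would invoke the well-known Aluthge--Wang type improvement: if $A$ is $p$-hyponormal with $0<p\leq 1/2$, then $\tilde{A}$ is $(p+\tfrac{1}{2})$-hyponormal; for $p\geq 1/2$ the transform $\tilde{A}$ is already hyponormal; and for log-hyponormal $A$ the transform $\tilde{A}$ turns out to be $\tfrac{1}{2}$-hyponormal. Hence, after at most two iterations of the Aluthge transform, both $A$ and $B^{\ast}$ become hyponormal. To move the FP-property back and forth between the original operators and their transforms, I would prove a substitution lemma along the following lines: when $A$ and $B$ are invertible, setting $Y=|A|^{1/2}X|B|^{-1/2}$ converts $AX=XB$ into $\tilde{A}Y=Y\tilde{B}$, and the relation $A^{\ast}X=XB^{\ast}$ into $\tilde{A}^{\ast}Y=Y\tilde{B}^{\ast}$; the equivalences then let one transfer the FP-property from $(\tilde{A},\tilde{B})$ back to $(A,B)$. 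The non-invertible cases I would handle by perturbing $A\to A+\varepsilon I$, $B\to B+\varepsilon I$ and passing to the limit as $\varepsilon\downarrow 0$, using norm-continuity of the Aluthge transform on bounded sets.

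For the second stage, with $A$ and $B^{\ast}$ hyponormal and $X\in\mbox{Com}(A,B)$, I would employ a Berberian-style enlargement of the underlying Hilbert spaces so that the extended hyponormal operators have approximate point spectrum coinciding with their point spectrum. In this extended setting, hyponormality forces eigenspaces corresponding to distinct eigenvalues to be pairwise orthogonal. Arguing as in Stampfli--Wadhwa or Radjavi--Rosenthal, one then shows that the restriction of $A$ to $\overline{R(X)}$ and that of $B$ to $(\ker X)^{\bot}$ are unitarily equivalent normal operators, at which point the classical Fuglede--Putnam theorem gives $A^{\ast}X=XB^{\ast}$; equivalently, by Lemma~\ref{L1}, $(A,B)$ has the FP-property.

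The main obstacle I expect is the non-invertible regime in the transfer step: the substitution $Y=|A|^{1/2}X|B|^{-1/2}$ is ill-defined when $|B|$ fails to be boundedly invertible, so one must either develop a careful perturbation argument with uniform spectral control, or bypass the Aluthge-iteration reduction by handling the hyponormal, $p$-hyponormal, and log-hyponormal cases separately through their Xia-type decompositions into a normal part and a pure part. Either route demands attention to the kernels of $A$ and $B$, which is where the heart of the technical work lies.
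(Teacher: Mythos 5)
There is a genuine gap, and it sits exactly at the step your whole plan hinges on: transferring the FP-property from $(\tilde{A},\tilde{B})$ back to $(A,B)$. First, note the paper does not prove this lemma at all; it is quoted from \cite{JTU}, so your proposal must stand on its own. It does not, because the substitution you describe is asymmetric in a way you overlook: for invertible $A,B$, $AX=XB$ is equivalent to $\tilde{A}Y=Y\tilde{B}$ with $Y=|A|^{1/2}X|B|^{-1/2}$ (Lemma~\ref{A3}(i)), but $A^{\ast}X=XB^{\ast}$ corresponds to $\tilde{A}^{\ast}Z=Z\tilde{B}^{\ast}$ with $Z=|A|^{-1/2}X|B|^{1/2}$ (Lemma~\ref{A3}(ii)), not with the same $Y$; your claim that one substitution converts both relations is false unless $U$ commutes with $|A|$. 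Consequently, knowing only that the transformed pair has the FP-property yields, for $X\in\mbox{Com}(A,B)$, merely $|A|X|B|^{-1}\in\mbox{Com}(A^{\ast},B^{\ast})$, which by Theorem~\ref{A4} is equivalent to $U^{2}X=XV^{2}$ --- strictly weaker than the desired $UX=XV$. This is precisely why Corollary~\ref{B2} needs the spectra of $U$ and $V$ to lie in an open semicircle, and the example following Proposition~\ref{B3}, with $A=\left(\begin{smallmatrix}2&-3\\1&-2\end{smallmatrix}\right)$, exhibits an invertible $A$ for which $(\tilde{A},\tilde{A})$ has the FP-property while $(A,A)$ does not. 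Since your argument uses the hyponormality of $\tilde{A}$ and $(\tilde{B})^{\ast}$ only to get the FP-property of the transformed pair and nothing more, the reduction collapses at this point.

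The fallback you offer for the non-invertible case also fails: $p$-hyponormality (unlike hyponormality) is not preserved under translation $A\mapsto A+\varepsilon I$, so the perturbed operators need not belong to the class and there is nothing to pass to the limit with; moreover log-hyponormality presupposes invertibility, while genuinely non-invertible $p$-hyponormal $B$ makes $|B|^{-1/2}$ meaningless, which is exactly the regime the lemma must cover. Your stage-two sketch (hyponormal $A$, $B^{\ast}$, Berberian extension, restrictions to $\overline{R(X)}$ and $(\ker X)^{\bot}$ unitarily equivalent normal, in the spirit of Lemma~\ref{L1}) is the right kind of argument, and indeed the actual proofs in the literature work directly with the $p$-hyponormal and log-hyponormal classes in this way --- via quasi-affinity theorems showing the relevant restrictions are unitarily equivalent normal operators and the fact that a normal restriction to an invariant subspace reduces such operators --- rather than through an Aluthge-transform reduction. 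As written, however, the first stage is not repairable by the stated means, so the proof is incomplete.
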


Recently some investigation in the operator theory have been related
to relationship between operators and their Aluthge transform; see
\cite{AMS, GAR, IYY, MOS3, TPA, YAM}. In this paper we present some
results that are in the same direction but of some new views of
points via the Fuglede--Putnam theorem. For instance, one of our
problems is as follows: Under what conditions on operators $A$ and
$B$ does the FP-property for the pair $(A,B)$ imply that for
$(\tilde{A},\tilde{B})$? Another question is related to the
converse. In Section 2 we try to answer these questions.

The iterated Aluthge transforms of $A$ are the operators
$\Delta_n(A)$ defined by $\Delta_1(A):=\tilde{A}$ and
$\Delta_n(A):=\Delta_1(\Delta_{n-1}(A))$ for $n>1$. A surprising
fact about these operators is the convergence of their norms to the
spectral radius of $A$; cf. \cite{YAM}. Also the convergence of the
sequence of iterates is an interesting question, which is recently
investigated in \cite{APS}. In section 2 and 3, we provide some
results about these operators as well.

Another interesting problem is that under what conditions on $A, B,
X$, any one of $AX=XB$ and $\tilde{A}X=X\tilde{B}$ implies the
other. In Section 3 we try to provide some results concerning this
problem that we call it the Fuglede--Putnam--Aluthge problem. More
precisely, we prove that if $(A,B)$ has the FP-property, then
$\mbox{Com}(A,B)\subseteq\mbox{Com}(\tilde{A},\tilde{B})$ and if,
moreover, $A$ is invertible operator then
$\mbox{Com}(A,B)=\mbox{Com}(\tilde{A},\tilde{B})$. We also study
Fuglede--Putnam--Aluthge problem modulo trace ideals and give
several Schatten $p$-norm inequalities in Section 4; see also
\cite{MOS1, MOS2}. The reader is referred to \cite{FUR} for
undefined notions and terminology.

\section{Fuglede--Putnam theorem for the Aluthge transforms}
In this section we assume that $A\in\mathbb{B}(\mathscr{H}_1)$ and
$B\in\mathbb{B}(\mathscr{H}_2)$ are invertible operators with the
polar decompositions $A=U|A|$ and $B=V|B|$, where $U$ and $V$ are
unitaries.
\begin{lemma}\label{A1}$\:$

 {\rm(i)}
$X\in\mbox{Com}(A,B)\:\:\:\Longleftrightarrow\:\:\:
|A|X|B|^{-1}=U^\ast XV$\,;

 {\rm(ii)}
$X\in\mbox{Com}(A,B)\bigcap\mbox{Com}(A^\ast,B^\ast)\:\:\:\Longleftrightarrow\:\:\:|A|X|B|^{-1}=U^\ast
XV=X$\,.
\end{lemma}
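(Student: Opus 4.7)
The plan is to turn both statements into algebraic manipulations of the polar decompositions $A=U|A|$ and $B=V|B|$, using only unitarity of $U,V$ and invertibility of $|A|,|B|$.

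For part (i), I would start from $AX=XB$, substitute the polar decompositions to get $U|A|X=XV|B|$, then multiply on the left by $U^{*}$ and on the right by $|B|^{-1}$ to arrive at $|A|X|B|^{-1}=U^{*}XV$. Because $U$ is unitary and $|B|$ is invertible, every step reverses, giving the claimed equivalence.

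For the backward direction of (ii), assuming $|A|X|B|^{-1}=U^{*}XV=X$, part (i) already yields $AX=XB$. The equation $|A|X|B|^{-1}=X$ rewrites as $|A|X=X|B|$, and $U^{*}XV=X$ rewrites as $UX=XV$. Using $A^{*}=|A|U^{*}$ and $B^{*}=|B|V^{*}$, I would then compute
\[
A^{*}X=|A|U^{*}X=|A|XV^{*}=X|B|V^{*}=XB^{*},
\]
producing $X\in\mbox{Com}(A^{*},B^{*})$.

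For the forward direction of (ii), assume both $AX=XB$ and $A^{*}X=XB^{*}$. Multiplying these gives
\[
|A|^{2}X=A^{*}AX=A^{*}(XB)=(A^{*}X)B=XB^{*}B=X|B|^{2}.
\]
The heart of the argument is the passage from $|A|^{2}X=X|B|^{2}$ to $|A|X=X|B|$. Since $|A|^{2}$ and $|B|^{2}$ are positive, one has $p(|A|^{2})X=Xp(|B|^{2})$ for every polynomial $p$; applying Stone--Weierstrass on $\sigma(|A|^{2})\cup\sigma(|B|^{2})\subseteq[0,\infty)$ to approximate $f(t)=\sqrt{t}$ by polynomials then yields $|A|X=X|B|$, that is, $|A|X|B|^{-1}=X$. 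Combining this with $AX=XB$ gives $U|A|X=UX|B|=XV|B|$, so $UX=XV$, i.e.\ $U^{*}XV=X$, completing the equivalence. I expect this square-root extraction via continuous functional calculus to be the only non-bookkeeping step; everything else is polar-decomposition algebra.
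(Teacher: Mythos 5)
Your proposal is correct and follows essentially the same route as the paper: part (i) is the polar-decomposition rearrangement, and part (ii) hinges on deriving $|A|^{2}X=X|B|^{2}$ and extracting the square root by uniform polynomial approximation of $\sqrt{t}$ on ${\rm sp}(|A|^{2})\cup{\rm sp}(|B|^{2})$, exactly as in the paper's argument. Your spelled-out reverse direction of (ii) is the step the paper dismisses as trivial, and it checks out.
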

\begin{proof}
{\rm(i)} is just the definition itself.

{\rm(ii)} Let $X\in\mbox{Com}(A,B)\bigcap\mbox{Com}(A^\ast,B^\ast)$.
Then $|A|^2X=X|B|^2$. Utilizing a sequence of polynomials uniformly
converging to $f(t)=\sqrt{t}$ on ${\rm sp}(|A|^2)\cup {\rm
sp}(|B|^2)$ and the functional calculus
 we get
$|A|X=X|B|$, that is $|A|X|B|^{-1}=X$. Hence from {\rm(i)} we have
$U^\ast XV=X$. The reverse direction is trivial.
\end{proof}
\begin{remark}\label{A2} The proof of Lemma \ref{A1} shows that if
$X\in\mbox{Com}(A,B)\bigcap\mbox{Com}(A^\ast,B^\ast)$ and $p$ be a
positive number, then $|A|^pX=X|B|^p$.
\end{remark}
\begin{lemma}\label{A3}$\:$

{\rm(i)}
$X\in\mbox{Com}(A,B)\:\:\:\Longleftrightarrow\:\:\:|A|^{1\over2}X|B|^{-1\over2}\in\mbox{Com}(\tilde{A},\tilde{B})$\,;

{\rm(ii)}$X\in\mbox{Com}(A^\ast,B^\ast)\:\:\:\Longleftrightarrow\:\:\:|A|^{-1\over2}X|B|^{1\over2}\in\mbox{Com}((\tilde{A})^\ast,(\tilde{B})^\ast)$\,.
\end{lemma}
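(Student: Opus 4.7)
The plan is to prove both equivalences by direct computation, exploiting the fact that invertibility of $A$ and $B$ guarantees that $|A|^{1/2}, |A|^{-1/2}, |B|^{1/2}, |B|^{-1/2}$ all exist as bounded operators and that $U, V$ are unitary.

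For part (i), I would set $Y = |A|^{1/2} X |B|^{-1/2}$ and compute $\tilde{A} Y$ and $Y \tilde{B}$ by substituting the definitions $\tilde{A} = |A|^{1/2} U |A|^{1/2}$ and $\tilde{B} = |B|^{1/2} V |B|^{1/2}$. The factors $|A|^{1/2}$ on the outer left and $|B|^{-1/2}$ on the outer right will match up and telescope, leaving
\[
\tilde{A} Y = |A|^{1/2} \bigl(U |A| X\bigr) |B|^{-1/2}, \qquad Y \tilde{B} = |A|^{1/2} \bigl(X V |B|\bigr) |B|^{-1/2}.
\]
Since $|A|^{1/2}$ is invertible from the left and $|B|^{-1/2}$ is invertible from the right, the identity $\tilde{A} Y = Y \tilde{B}$ is equivalent to $U|A| X = X V |B|$, which is exactly $AX = XB$, giving (i).

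For part (ii), the same strategy applies. Setting $Z = |A|^{-1/2} X |B|^{1/2}$ and using $(\tilde{A})^\ast = |A|^{1/2} U^\ast |A|^{1/2}$ and $(\tilde{B})^\ast = |B|^{1/2} V^\ast |B|^{1/2}$, I would compute
\[
(\tilde{A})^\ast Z = |A|^{1/2} U^\ast X |B|^{1/2}, \qquad Z (\tilde{B})^\ast = |A|^{-1/2} X |B| V^\ast |B|^{1/2},
\]
and then cancel the invertible factor $|B|^{1/2}$ on the right. Multiplying through by $|A|^{1/2}$ on the left turns $(\tilde{A})^\ast Z = Z (\tilde{B})^\ast$ into $|A| U^\ast X = X |B| V^\ast$, which is $A^\ast X = X B^\ast$.

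There is no real obstacle: the proof is essentially an algebraic manipulation, and the only subtlety is remembering that invertibility of $A$ and $B$ is used to make sense of the conjugating factors $|A|^{\pm 1/2}, |B|^{\pm 1/2}$ (this hypothesis has already been placed at the top of the section) and to freely cancel them on either side. Since both directions of each equivalence follow from the same chain of reversible identities, a single computation handles each of the two "iff"s at once.
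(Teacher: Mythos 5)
Your proof is correct and follows essentially the same route as the paper: substitute the definitions $\tilde{A}=|A|^{1\over2}U|A|^{1\over2}$, $\tilde{B}=|B|^{1\over2}V|B|^{1\over2}$, let the outer factors $|A|^{\pm{1\over2}}$, $|B|^{\pm{1\over2}}$ telescope, and use their invertibility (the standing hypothesis of the section) to cancel and get the equivalence with $U|A|X=XV|B|$, respectively $|A|U^\ast X=X|B|V^\ast$. The only cosmetic difference is that you spell out the reverse implications via cancellation, where the paper simply remarks that the converse obviously holds and that (ii) is proved similarly.
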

\begin{proof}
{\rm(i)}Let $AX=XB$ for some $X\in \mathbb{B}(\mathscr{H})$. Then
\begin{eqnarray*}\label{5}
U|A|X&=&XV|B|.
\end{eqnarray*}
Hence
\begin{eqnarray*}\label{6}
\tilde{A}(|A|^{1\over2}X|B|^{-1\over2})&=&
|A|^{1\over2}(U|A|^{1\over2}|A|^{1\over2}X)|B|^{-1\over2}\nonumber\\&=&
|A|^{1\over2}(X|B|^{-1\over2}|B|^{1\over2}V|B|)|B|^{-1\over2}\nonumber\\
&=&(|A|^{1\over2}X|B|^{-1\over2})\tilde{B}.
\end{eqnarray*}
The converse obviously holds.

{\rm(ii)} It can be proved in a similar way to {\rm(i)}.
\end{proof}
\begin{theorem}\label{A4}
The pair $(\tilde{A},\tilde{B})$ has the FP-property, i.e.
$\mbox{Com}(\tilde{A},\tilde{B})\subseteq\mbox{Com}((\tilde{A})^\ast,(\tilde{B})^\ast)$
if and only if $U^2X=XV^2$ for any $X\in\mbox{Com}(A,B)$.
\end{theorem}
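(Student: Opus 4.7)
The plan is to translate the FP-property for $(\tilde{A},\tilde{B})$ into a purely algebraic identity in $U$ and $V$ by systematically invoking Lemmas \ref{A1} and \ref{A3}.

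First, I would observe that since $|A|$ and $|B|$ are invertible, Lemma \ref{A3}(i) makes the map $X\mapsto|A|^{1/2}X|B|^{-1/2}$ a bijection from $\mbox{Com}(A,B)$ onto $\mbox{Com}(\tilde{A},\tilde{B})$, and similarly Lemma \ref{A3}(ii) gives a bijection from $\mbox{Com}(A^\ast,B^\ast)$ onto $\mbox{Com}((\tilde{A})^\ast,(\tilde{B})^\ast)$. Combining these, an element $Y=|A|^{1/2}X|B|^{-1/2}$ of $\mbox{Com}(\tilde{A},\tilde{B})$ lies in $\mbox{Com}((\tilde{A})^\ast,(\tilde{B})^\ast)$ if and only if $|A|^{1/2}Y|B|^{-1/2}=|A|X|B|^{-1}$ belongs to $\mbox{Com}(A^\ast,B^\ast)$. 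Consequently, the FP-property for $(\tilde{A},\tilde{B})$ is equivalent to the single requirement that $|A|X|B|^{-1}\in\mbox{Com}(A^\ast,B^\ast)$ for every $X\in\mbox{Com}(A,B)$.

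Next, I would unpack this condition. Writing $A^\ast=|A|U^\ast$ and $B^\ast=|B|V^\ast$, the equation $A^\ast(|A|X|B|^{-1})=(|A|X|B|^{-1})B^\ast$ reduces, after cancelling the invertible $|A|$ on the left, to $U^\ast|A|X|B|^{-1}=XV^\ast$. Inserting the identity $|A|X|B|^{-1}=U^\ast XV$ furnished by Lemma \ref{A1}(i), valid for every $X\in\mbox{Com}(A,B)$, turns this into $U^{\ast 2}XV=XV^\ast$, which by unitarity of $V$ is the same as $U^{\ast 2}X=XV^{\ast 2}$, and in turn equivalent to $U^{2}X=XV^{2}$.

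The entire argument is a chain of equivalences, so both directions of the theorem are obtained simultaneously. There is no serious obstacle: the only care required is to keep track of the bijective correspondences provided by Lemma \ref{A3}, to exploit invertibility of $|A|$ and $|B|$ when inverting those maps, and to handle the unitary factors $U$ and $V$ correctly when rearranging. Once these translations are performed, the proof collapses to a short algebraic computation.
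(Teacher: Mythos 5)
Your proposal is correct and follows essentially the same route as the paper: it first reduces the FP-property of $(\tilde{A},\tilde{B})$ to the condition $|A|X|B|^{-1}\in\mbox{Com}(A^\ast,B^\ast)$ for all $X\in\mbox{Com}(A,B)$ via Lemma \ref{A3}, and then translates that condition into $U^2X=XV^2$ using Lemma \ref{A1}(i) and the polar decompositions $A^\ast=|A|U^\ast$, $B^\ast=|B|V^\ast$. Your presentation as a reversible chain of equivalences merely makes explicit the "converse in a similar fashion" step that the paper leaves to the reader.
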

\begin{proof}First we show that the FP-property for
$(\tilde{A},\tilde{B})$ is equivalent to the following requirement
\begin{eqnarray}\label{B1}
|A|X|B|^{-1}\in\mbox{Com}(A^\ast,B^\ast)\:\:\:\:\:(X\in\mbox{Com}(A,B)).
\end{eqnarray}
Let$(\tilde{A},\tilde{B})$ have the FP-property and
$X\in\mbox{Com}(A,B)$. By Lemma \ref{A3}{\rm(i)},
$|A|^{1\over2}X|B|^{-1\over2}\in\mbox{Com}(\tilde{A},\tilde{B}).$
Since $(\tilde{A},\tilde{B})$ has the FP-property we have
$|A|^{1\over2}X|B|^{-1\over2}\in\mbox{Com}((\tilde{A})^\ast,(\tilde{B})^\ast).$
By Lemma \ref{A3}{\rm(ii)} we have
$|A|X|B|^{-1}\in\mbox{Com}(A^\ast,B^\ast)$, so we reach \eqref{B1}.
To prove the revers, assume the assertion \eqref{B1} and let
$X\in\mbox{Com}(\tilde{A},\tilde{B})$. It follows from Lemma
\ref{A3}{\rm(i)} that
$|A|^{-1\over2}X|B|^{1\over2}\in\mbox{Com}(A,B)$. Hence by
\eqref{B1} we have
$|A|^{1\over2}X|B|^{-1\over2}\in\mbox{Com}(A^\ast,B^\ast)$ which in
turn implies that
$X\in\mbox{Com}((\tilde{A})^\ast,(\tilde{B})^\ast)$. Thus
$(\tilde{A},\tilde{B})$ has the FP-property.

Let \eqref{B1} hold. For any $X\in\mbox{Com}(A,B)$ it follows from
Lemma \ref{A1}{\rm(i)} that $|A|X|B|^{-1}=U^\ast XV$. Using
\eqref{B1} we obtain
$$|A|U^\ast U^\ast
XV=|A|X|B|^{-1}|B|V^\ast\:\:\:\:(X\in\mbox{Com}(A,B)),$$ which
simply becomes $U^2X=XV^2$ for any $X\in\mbox{Com}(A,B)$. The
converse can be proved in a similar fashion.
\end{proof}
\begin{corollary}\label{DD}
If $(A,B)$ has the FP-property, then so is $(\tilde{A},\tilde{B})$.
\end{corollary}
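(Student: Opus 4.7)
The plan is to apply Theorem \ref{A4} and reduce the statement to the unitary identity $U^{2}X = XV^{2}$ for every $X\in\mathrm{Com}(A,B)$. That is, by the theorem just proved, the FP-property of $(\tilde A,\tilde B)$ is equivalent to this relation, so I only need to verify this relation under the hypothesis that $(A,B)$ has the FP-property.

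Starting from the FP-property of $(A,B)$: any $X\in\mathrm{Com}(A,B)$ automatically lies in $\mathrm{Com}(A^{\ast},B^{\ast})$ as well, so I would next invoke Lemma \ref{A1}(ii), which characterizes elements of $\mathrm{Com}(A,B)\cap\mathrm{Com}(A^{\ast},B^{\ast})$ as precisely those $X$ satisfying $|A|X|B|^{-1}=U^{\ast}XV=X$. The useful piece is the unitary relation $U^{\ast}XV=X$, which (since $U$ is unitary) rearranges to $UX=XV$.

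Finally, I would iterate this identity once:
\[
U^{2}X \;=\; U(UX) \;=\; U(XV) \;=\; (UX)V \;=\; XV^{2},
\]
which is exactly the criterion supplied by Theorem \ref{A4}, completing the proof. There is no real obstacle; the corollary is essentially an immediate packaging of Theorem \ref{A4} together with Lemma \ref{A1}(ii), with the only substantive manipulation being the single commutation step $UX=XV\Rightarrow U^{2}X=XV^{2}$.
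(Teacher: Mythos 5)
Your argument is correct and follows the paper's own route exactly: invoke the FP-property to place $X\in\mathrm{Com}(A,B)\cap\mathrm{Com}(A^{\ast},B^{\ast})$, apply Lemma \ref{A1}(ii) to get $UX=XV$, square it to $U^{2}X=XV^{2}$, and conclude via Theorem \ref{A4}. No gaps and no meaningful divergence from the paper's proof.
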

\begin{proof}
If $(A,B)$ has the FP-property, then by Lemma \ref{A1}{\rm(ii)}
$UX=XV$ for any $X\in\mbox{Com}(A,B)$. Hence $U^2X=XV^2$. Applying
Theorem \ref{A4} we observe that $(\tilde{A},\tilde{B})$ has the
FP-property.
\end{proof}
\begin{corollary}\label{A6}
If $(A,B)$ has the FP-property, then so is
$(\Delta_n(A),\Delta_n(B))$ for any positive integer $n$.
\end{corollary}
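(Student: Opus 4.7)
The plan is a straightforward induction on $n$, using Corollary \ref{DD} as the key engine and the base case.

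For the base case $n=1$, the statement reduces exactly to Corollary \ref{DD}, since $\Delta_1(A)=\tilde{A}$ and $\Delta_1(B)=\tilde{B}$.

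For the inductive step, suppose the pair $(\Delta_{n-1}(A),\Delta_{n-1}(B))$ has the FP-property. I would then like to apply Corollary \ref{DD} to this pair in order to deduce the FP-property for $(\widetilde{\Delta_{n-1}(A)},\widetilde{\Delta_{n-1}(B)}) = (\Delta_n(A),\Delta_n(B))$. The only subtlety is that Corollary \ref{DD} is stated under the standing hypothesis of the section, namely invertibility of both operators. So I need to verify that invertibility propagates along the sequence of iterated Aluthge transforms. This is immediate: if $C$ is invertible with polar decomposition $C=W|C|$, then $W$ is unitary and $|C|$ is invertible, hence $\tilde{C}=|C|^{1/2}W|C|^{1/2}$ is a product of three invertible operators and is therefore invertible. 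A trivial induction then shows that $\Delta_{n-1}(A)$ and $\Delta_{n-1}(B)$ are invertible whenever $A$ and $B$ are.

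The main (and essentially only) obstacle is this invertibility bookkeeping, since the substantive content of the result is already packaged in Corollary \ref{DD}. Once invertibility at each stage is confirmed, applying Corollary \ref{DD} to the pair $(\Delta_{n-1}(A),\Delta_{n-1}(B))$ closes the induction and yields the FP-property for $(\Delta_n(A),\Delta_n(B))$ for every positive integer $n$.
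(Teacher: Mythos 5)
Your proposal is correct and follows exactly the route the paper intends: the corollary is an immediate iteration of Corollary \ref{DD}, and your induction together with the observation that $\tilde{C}=|C|^{1/2}W|C|^{1/2}$ is invertible whenever $C$ is takes care of the standing invertibility hypothesis of the section. Nothing further is needed.
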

\begin{corollary}\label{B2}
If the spectrums of both $U$ and $V$ are contained in some open
semicircle, then the FP-property for $(A,B)$ is equivalent to the
FP-property for $(\tilde{A},\tilde{B})$.
\end{corollary}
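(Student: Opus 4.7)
The forward direction, that the FP-property for $(A,B)$ implies the FP-property for $(\tilde{A},\tilde{B})$, is exactly Corollary \ref{DD} and requires no hypothesis on the spectra. So the task is the converse: assuming the spectra of $U$ and $V$ both lie in some open semicircle and that $(\tilde{A},\tilde{B})$ has the FP-property, deduce that $(A,B)$ has the FP-property as well.

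My plan is to combine Theorem \ref{A4} with a continuous functional calculus argument involving a branch of the square root. Assume the common (or, after a trivial reduction, common) open semicircle is $S_\alpha=\{e^{i\theta}:\theta\in(\alpha,\alpha+\pi)\}$. The map $z\mapsto z^2$ sends $S_\alpha$ bijectively onto $\mathbb{T}\setminus\{e^{2i\alpha}\}$, and its inverse $g:\mathbb{T}\setminus\{e^{2i\alpha}\}\to S_\alpha$ is continuous. Since $\operatorname{sp}(U),\operatorname{sp}(V)\subseteq S_\alpha$, we have $\operatorname{sp}(U^2),\operatorname{sp}(V^2)\subseteq\mathbb{T}\setminus\{e^{2i\alpha}\}$, and by construction $g(U^2)=U$ and $g(V^2)=V$.

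Now let $X\in\mbox{Com}(A,B)$. Since $(\tilde{A},\tilde{B})$ has the FP-property, Theorem \ref{A4} gives $U^2X=XV^2$. Iterating, $(U^2)^nX=X(V^2)^n$ for every nonnegative integer $n$, and hence $p(U^2)X=Xp(V^2)$ for every polynomial $p$. Choosing a sequence of polynomials $p_k$ that converges uniformly to $g$ on the compact set $\operatorname{sp}(U^2)\cup\operatorname{sp}(V^2)$ and passing to the limit in operator norm via the continuous functional calculus yields $g(U^2)X=Xg(V^2)$, i.e.\ $UX=XV$.

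Finally, Lemma \ref{A1}(i) gives $|A|X|B|^{-1}=U^\ast XV$, and the identity $UX=XV$ together with the unitarity of $U$ yields $U^\ast XV=U^\ast UX=X$. Hence $|A|X|B|^{-1}=U^\ast XV=X$, and Lemma \ref{A1}(ii) shows $X\in\mbox{Com}(A^\ast,B^\ast)$, so $(A,B)$ has the FP-property. The main technical step I anticipate is the second paragraph: producing a continuous branch of square root on a set containing both spectra simultaneously so that a single functional-calculus argument transfers the relation $U^2X=XV^2$ to $UX=XV$; the open-semicircle hypothesis is exactly what makes such a branch available.
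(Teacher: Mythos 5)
Your proposal is correct and follows essentially the same route as the paper: the forward direction via Corollary \ref{DD}, and for the converse, Theorem \ref{A4} gives $U^2X=XV^2$, which is upgraded to $UX=XV$ by approximating a continuous branch of the square root by polynomials on the spectra (the paper phrases this as approximating $U$, $V$ by polynomials in $U^2$, $V^2$), and then Lemma \ref{A1} yields $X\in\mbox{Com}(A^\ast,B^\ast)$. Your write-up is in fact slightly more explicit about the branch $g$ and the uniform approximation on $\operatorname{sp}(U^2)\cup\operatorname{sp}(V^2)$, but the argument is the same.
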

\begin{proof}
Let $(\tilde{A},\tilde{B})$ have the FP-property and
$X\in\mbox{Com}(A,B)$. Then $U^2X=XV^2$ by Theorem \ref{A4}. Under
the spectral conditions on $U$ and $V$ the unitary operator $U$
(resp. $V$) can be approximated by polynomials of $U^2$ (resp.
$V^2$), therefore $U^2X=XV^2$ implies $UX=XV$, that is, $U^*XV=X$
and this by Lemma \ref{A1} implies that
$X\in\mbox{Com}(A^\ast,B^\ast)$. The rest follows from Corollary
\ref{DD}.
\end{proof}
\begin{remark}
Note that if the conditions on $U$ and $V$ in Corollary \ref{B2}
are replaced by the condition that $U^{2n_0+1}=V^{2n_0+1}=I$ for
some positive integer $n_0$, then we obtain the same result. In fact
from $U^2X=XV^2$ we get $U^{2n_0}X=XV^{2n_0}$ that under our
assumption implies that $UX=XV$.
\end{remark}\label{A5}
An interesting problem is that under what conditions on operator
$A$, $A^n=I$ implies that $U^n=I$, where $A=U|A|$ is the polar
decomposition of $A$ and $n\geq1$. It is known that for a normaloid
operator $A$, $A^n=I$ implies that $A$ is unitary \cite[Corollary
3.7.3.6]{FUR}. The next result is related to this problem.

\begin{proposition}\label{B3}
Let $A=U|A|$ be the polar decomposition of $A$ and $A^2=I$ then
$U^2=I$.
\end{proposition}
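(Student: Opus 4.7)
The plan is to translate $A^{2}=I$ into an algebraic identity linking $U$ and $|A|$, then manipulate that identity, both directly and through its adjoint, to produce a second identity, and finally to cancel between the two. Note first that $A^{2}=I$ forces $A$ to be invertible with $A^{-1}=A$, so $U$ is unitary and $|A|$ is invertible; this keeps us within the invertible setting of the section. Substituting the polar decomposition into $A^{2}=I$ yields $U|A|U|A|=I$, and right-multiplication by $|A|^{-1}$ gives the key relation
\[
U|A|U=|A|^{-1}. \qquad(\ast)
\]

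Next, I would take the adjoint of $(\ast)$ to obtain $U^{*}|A|U^{*}=|A|^{-1}$ and then multiply $(\ast)$ by this adjoint form on the right, using $UU^{*}=I$ to collapse the middle:
\[
|A|^{-2}=(U|A|U)(U^{*}|A|U^{*})=U|A|^{2}U^{*}=(U|A|U^{*})^{2}.
\]
Since $U|A|U^{*}$ is a positive operator (a unitary conjugate of the positive operator $|A|$) and $|A|^{-1}$ is the unique positive square root of $|A|^{-2}$, it follows that $U|A|U^{*}=|A|^{-1}$.

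Combining this with $(\ast)$ gives $U|A|U=U|A|U^{*}$, and since $U|A|$ is invertible I can cancel it on the left to conclude $U=U^{*}$, i.e., $U^{2}=I$, as required.

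The main delicate point is the square-root step: one must be sure that $U|A|U^{*}$ is genuinely positive (not merely self-adjoint) so that the uniqueness of positive square roots applies; this is immediate since $|A|\ge 0$ and unitary conjugation preserves positivity. Apart from this, the argument is a short algebraic manipulation exploiting the unitarity of $U$ and the invertibility of $|A|$.
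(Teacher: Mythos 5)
Your proof is correct: every step checks out, including the delicate point you flag (positivity of $U|A|U^{*}$, hence uniqueness of the positive square root of $|A|^{-2}$), and the final cancellation of the invertible factor $U|A|$ is legitimate. You reach the same pivotal identity as the paper, namely $U|A|U=|A|^{-1}$, but from there the two arguments diverge. The paper rewrites this identity as $|A|^{-1}=U^{2}\,(U^{*}|A|U)$, observes that this exhibits a polar decomposition of the invertible positive operator $|A|^{-1}$ (unitary $U^{2}$ times the positive operator $U^{*}|A|U$), and invokes uniqueness of the polar decomposition: since $|A|^{-1}$ is positive definite its unitary part must be $I$, so $U^{2}=I$ in one stroke. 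You instead take the adjoint of the identity, multiply the two forms to get $(U|A|U^{*})^{2}=|A|^{-2}$, use uniqueness of positive square roots to conclude $U|A|U^{*}=|A|^{-1}=U|A|U$, and cancel to obtain $U=U^{*}$ (which, with unitarity, gives $U^{2}=I$). Your route is slightly longer but more elementary, relying only on uniqueness of positive square roots rather than on uniqueness of the polar decomposition of an invertible operator (which is itself usually proved via square-root uniqueness); the paper's route is shorter and makes the structural point --- that $U^{2}$ is forced to be the angular part of a positive definite operator --- more transparent.
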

\begin{proof}
Since $A^2=I$ we have
\begin{eqnarray}\label{15}
U|A|U|A|&=&I.
\end{eqnarray}
We multiply both side of \eqref{15} by $|A|^{-1}$ to obtain
\begin{eqnarray*}\label{16}
|A|^{-1}&=&U|A|U=U^2U^\ast|A|U.
\end{eqnarray*}
Since $U^2$ is unitary and $U^\ast|A|U\geq0$ and view of uniqueness
of polar decomposition of $|A|^{-1}$, the unitary operator $U^2$
should coincide with the angular part $I$ of positive definite
$|A|^{-1}$.
\end{proof}
\begin{remark}
The above proposition is a consequence of \cite[Theorem 2.1]{IYY2},
which states that if $T=U|T|, S=V|S|$ and $|T||S^*|=W|\,|T||S^*|\,|$
are the polar decompositions, then $TS=UWV|TS|$ is also the polar
decomposition.
\end{remark}
\begin{example} Proposition \ref{B3} is not valid when the power $2$ is replaced by $3$. Indeed there exists an operator
$A$ with the polar decomposition $A=U|A|$ such that $A^3=I$ but
$U^3\neq I$. To see this let $A=\left(
\begin{array}{cc}
   0 & 1 \\
   -1 & -1 \\
  \end{array}
 \right)$. Then $|A|=
 \left(
  \begin{array}{cc}
   {2\sqrt{5}}\over5 & {\sqrt{5}}\over5 \\
   {\sqrt{5}}\over5 & {3\sqrt{5}}\over5 \\
  \end{array}
 \right)$ and $U=A|A|^{-1}=
 \left(
  \begin{array}{cc}
   {-\sqrt{5}}\over5 & {2\sqrt{5}}\over5 \\
   {-2\sqrt{5}}\over5 & {-\sqrt{5}}\over5 \\
  \end{array}
 \right)$. It is easy to verify that $A^3=I$ and $U^3=
 \left(
  \begin{array}{cc}
   {11\sqrt{5}}\over25 & {-2\sqrt{5}}\over25 \\
   {-2\sqrt{5}}\over25 & {3\sqrt{5}}\over25 \\
  \end{array}
 \right)\neq I$\,.
\end{example}
Now we present an example to show that in Corollary \ref{B2} and
Remark \ref{A5} the conditions are essential.
\begin{example}
Let $A=\left(
  \begin{array}{cc}
   2 & -3 \\
   1 & -2 \\
  \end{array}
 \right)$ and $X=\left(
  \begin{array}{cc}
   0 & -3 \\
   1 & -4 \\
  \end{array}
 \right)$. It is easy to verify that
$AX=XA$ and $A^\ast X\neq XA^\ast$. On the other hand, an easy
computation shows that $A^2=I$. Hence by Proposition \ref{B3},
$U^2=I$ in which $A=U|A|$ is the polar decomposition of $A$. Hence
$U=U^\ast$, so that $\tilde{A}=|A|^{1\over2}U|A|^{1\over2}$ is self
adjoint. Thus $\tilde{A}X=X\tilde{A}$ implies $\tilde{A}^\ast
X=X\tilde{A}^\ast$ for any $X$.
\end{example}

\section{The Fuglede--Putnam--Aluthge problem}

In this section, we present some results concerning the
Fuglede--Putnam--Aluthge problem without assumption of invertibility
of $A$ and $B$, in general.

\begin{theorem}\label{C1}
Let $A\in \mathbb{B}(\mathscr{H}_1)$, $B\in
\mathbb{B}(\mathscr{H}_2)$ and $(A,B)$ have the FP-property. Then
$\mbox{Com}(A,B)\subseteq\mbox{Com}(\tilde{A},\tilde{B})$\,.
\end{theorem}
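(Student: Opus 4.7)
The idea is to reduce the desired identity $\tilde{A}X = X\tilde{B}$ to two intermediate commutation relations, namely $|A|^{1/2} X = X|B|^{1/2}$ and $UX = XV$, where $A=U|A|$ and $B=V|B|$ are the polar decompositions.

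First, the FP-property applied to $X \in \mbox{Com}(A,B)$ yields $A^{\ast}X = XB^{\ast}$; composing this with $AX=XB$ gives $|A|^2 X = A^{\ast}AX = A^{\ast}XB = XB^{\ast}B = X|B|^2$. A sequence of polynomials uniformly approximating $t \mapsto t^{1/4}$ on $\mbox{sp}(|A|^2)\cup\mbox{sp}(|B|^2)$ together with the continuous functional calculus upgrades this to $|A|^{1/2}X = X|B|^{1/2}$, as well as $|A|X = X|B|$. This is the observation behind Remark \ref{A2}, but carried out without needing invertibility.

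Next, substituting $|A|X = X|B|$ into the identity $U|A|X = XV|B|$ (which is just $AX=XB$) gives $(UX - XV)|B| = 0$, so $UX=XV$ holds on $\overline{R(|B|)}$. To extend across $\ker|B|$, note that $\ker V = \ker|B|$ (since $V$ is an isometry on $\overline{R(|B|)}$), so $Vy = 0$ for $y \in \ker|B|$; moreover $|A|^2 Xy = X|B|^2 y = 0$ forces $Xy \in \ker|A| = \ker U$, hence $UXy = 0$. Thus $UX = XV$ on all of $\mathscr{H}_2$.

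Finally, one computes
\[
\tilde{A}X = |A|^{1/2} U |A|^{1/2} X = |A|^{1/2} U X |B|^{1/2} = |A|^{1/2} X V |B|^{1/2} = X |B|^{1/2} V |B|^{1/2} = X \tilde{B},
\]
using $|A|^{1/2}X = X|B|^{1/2}$ twice (in the second and fourth equalities) and $UX = XV$ once. The main technical point, and essentially the only place where care is needed, is the extension of $UX=XV$ across $\ker|B|$: this is precisely what replaces the invertibility hypothesis used throughout Section 2, and once it is handled the remainder of the argument is pure rearrangement.
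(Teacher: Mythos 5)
Your proof is correct, and it shares its first half with the paper's: deriving $|A|^2X=X|B|^2$ from the FP-property and upgrading it by polynomial approximation and the functional calculus to $|A|X=X|B|$ and $|A|^{1/2}X=X|B|^{1/2}$ is exactly what the paper does. Where you diverge is in how the partial isometries are handled. The paper never establishes $UX=XV$: instead it notes $U|A|^nX=XV|B|^n$ for $n\geq 1$ and applies polynomials \emph{without constant term} converging uniformly to $t^{1/2}$, obtaining $U|A|^{1/2}X=XV|B|^{1/2}$ directly, which is all the final rearrangement requires; the ``no constant term'' restriction is precisely the device that sidesteps the question of whether $U$ and $V$ themselves are intertwined. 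You instead prove the stronger relation $UX=XV$ by decomposing $\mathscr{H}_2=\overline{R(|B|)}\oplus\ker|B|$, using $(UX-XV)|B|=0$ on the first summand and the kernel identities $\ker V=\ker|B|$, $\ker U=\ker|A|$ (together with $|A|^2Xy=X|B|^2y=0$) on the second. This is valid, with the caveat that it relies on the canonical polar decomposition in which the partial isometry vanishes on the kernel of the operator — the paper uses this convention, but your parenthetical justification (``$V$ is an isometry on $\overline{R(|B|)}$'') is not by itself the reason that $\ker V=\ker|B|$; that equality is part of how the canonical $V$ is defined. Your route buys a little more: $UX=XV$ extends Lemma \ref{A1}(ii) to the non-invertible setting, and since you intertwine the polar data separately, the same argument immediately gives $A_{(s,t)}X=XB_{(s,t)}$ for every $(s,t)$-Aluthge transform, a point the paper only remarks on. The paper's route, in exchange, avoids any discussion of kernels and ranges by working solely with the combination $U|A|^{1/2}$.
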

\begin{proof}
Let $A=U|A|$ and $B=V|B|$ be the polar decompositions of $A$ and
$B$, respectively. Let $\{p_n\}$ be a sequence of polynomials with
no constant term such that $p_n(t)\rightarrow {t^{1\over2}}$
uniformly on a certain compact set as $n\rightarrow\infty$. Let
$X\in\mbox{Com}(A,B)$. By our hypothesis, $A^* X=XB^*$. Hence
$|A|^2X=X|B|^2$ and so
 $p_n(|A|^2)X=Xp_n(|B|^2)$, hence $|A|X=X|B|$. Using the same argument we get
$|A|^{1\over2}X=X|B|^{1\over2}$. Thus $U|A|^nX=XV|B|^n$ for $n\in
\mathbb{N}$. We can use the argument above to show that
$Up_n(|A|)X=XVp_n(|B|)$ and conclude that
$U|A|^{1\over2}X=XV|B|^{1\over2}$. Hence we have
$$\tilde{A}X=|A|^{1\over2}U|A|^{1\over2}X=|A|^{1\over2}XV|B|^{1\over2}=X|B|^{1\over2}V|B|^{1\over2}=X\tilde{B}\,.$$
\end{proof}

\begin{corollary}
Let $A\in \mathbb{B}(\mathscr{H}_1)$ and $B^*\in
\mathbb{B}(\mathscr{H}_2)$ be either log-hyponormal or
$p$-hyponormal
 operators. Then $\mbox{Com}(A,B)\subseteq\mbox{Com}(\tilde{A},\tilde{B})$
\end{corollary}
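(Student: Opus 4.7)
The plan is to assemble this corollary as an immediate composition of two results already in the paper, with no new analytical content required. The hypothesis on $A$ and $B^*$ (log-hyponormal or $p$-hyponormal) is precisely the hypothesis of Lemma \ref{L3}, which hands us the FP-property for the pair $(A,B)$ for free. The conclusion we want is precisely the conclusion of Theorem \ref{C1}, whose hypothesis is exactly the FP-property for $(A,B)$. So the whole proof is a one-line chain: hypothesis $\Rightarrow$ Lemma \ref{L3} $\Rightarrow$ FP-property for $(A,B)$ $\Rightarrow$ Theorem \ref{C1} $\Rightarrow$ $\mathrm{Com}(A,B)\subseteq\mathrm{Com}(\tilde{A},\tilde{B})$.

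Concretely, first I would invoke Lemma \ref{L3} on $(A,B)$ under the stated hypothesis to deduce that every $X\in\mathrm{Com}(A,B)$ also satisfies $A^*X=XB^*$, i.e., $(A,B)$ has the FP-property. Then I would apply Theorem \ref{C1} verbatim: given the FP-property for $(A,B)$, it asserts $\mathrm{Com}(A,B)\subseteq\mathrm{Com}(\tilde{A},\tilde{B})$, finishing the proof. No invertibility of $A$ or $B$ is needed, which matches the hypotheses of both auxiliary results.

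There is essentially no main obstacle here. If anything, the only point that warrants a brief check is that the hypotheses of Lemma \ref{L3} and Theorem \ref{C1} are stated in the same form as the corollary (they are: Lemma \ref{L3} takes $A$ and $B^*$ log-hyponormal or $p$-hyponormal and yields the FP-property; Theorem \ref{C1} takes the FP-property and yields the Aluthge commutant inclusion), so no translation between conventions is required. Thus the proof is a direct two-step citation and can be written in a single sentence.
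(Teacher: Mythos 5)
Your proposal is correct and coincides with the paper's own argument: the paper also proves this corollary by citing Lemma \ref{L3} to obtain the FP-property for $(A,B)$ and then applying Theorem \ref{C1}. Nothing further is needed.
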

\begin{proof}
It follows from Lemma \ref{L3} and Theorem \ref{C1}.
\end{proof}

Let $A=U|A|$ be the polar decomposition of $A$.
$A_{(s,t)}=|A|^sU|A|^t$, for $s,t\geq0$ is called $(s,t)-$Aluthge
transform of $A$. Note that we can use the proof of Theorem 3.1 for
$A_{(s,t)}, B_{(s,t)}$ instead of $\tilde{A}, \tilde{B}$,
respectively.

Using some ideas of \cite[Theorem 8]{JTU} we prove the next
result.
\begin{theorem}\label{C3}
Let $A\in \mathbb{B}(\mathscr{H}_1)$ be invertible and $B\in
\mathbb{B}(\mathscr{H}_2)$ be arbitrary. If $(A,B)$ has the
FP-property, then $\mbox{Com}(A,B)=\mbox{Com}(\tilde{A},\tilde{B})$.
\end{theorem}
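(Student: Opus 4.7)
The plan is to establish the nontrivial inclusion $\mbox{Com}(\tilde{A},\tilde{B})\subseteq\mbox{Com}(A,B)$, since the reverse inclusion is Theorem \ref{C1}. Given $X$ with $\tilde{A}X=X\tilde{B}$, i.e.\ $|A|^{1/2}U|A|^{1/2}X=X|B|^{1/2}V|B|^{1/2}$, the strategy is: rescale $X$ into an element of $\mbox{Com}(A,B)$, apply the FP-property there, and feed the resulting identities back into the Aluthge equation to recover $AX=XB$. Invertibility of $A$ is exactly what makes this rescaling reversible on the $A$-side.

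Concretely, I would set $Y:=|A|^{-1/2}X|B|^{1/2}$, which is legitimate because $|A|$ is invertible. Multiplying the Aluthge identity on the left by $|A|^{-1/2}$ and on the right by $|B|^{1/2}$ yields $AY=YB$, placing $Y$ in $\mbox{Com}(A,B)$. The FP-property then gives $Y\in\mbox{Com}(A^{\ast},B^{\ast})$, and rerunning the polynomial-approximation argument from the proof of Theorem \ref{C1} on this $Y$ produces the two relations $|A|^{1/2}Y=Y|B|^{1/2}$ and $U|A|^{1/2}Y=YV|B|^{1/2}$. Substituting the definition of $Y$ into each and using invertibility of $|A|^{1/2}$ and unitarity of $U$ to cancel simplifies these to the clean identities $|A|^{1/2}X=X|B|^{1/2}$ and, by iteration, $|A|X=X|B|$.

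With $|A|^{1/2}X=X|B|^{1/2}$ in hand, I would plug it back into $|A|^{1/2}U|A|^{1/2}X=X|B|^{1/2}V|B|^{1/2}$ and cancel the invertible $|A|^{1/2}$ on the left to obtain $(UX-XV)|B|^{1/2}=0$. This gives $UX=XV$ on $\overline{R(|B|^{1/2})}=(\ker B)^{\perp}$. Combined with $|A|X=X|B|$, the desired equality $AX=U|A|X=UX|B|=XV|B|=XB$ then follows on this subspace.

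The main obstacle is the orthogonal piece $\ker B$, because without invertibility of $B$ the partial isometry $V$ need not be unitary and the cancellation above does not reach there. I would handle it by exploiting invertibility of $A$ a second time: for $\xi\in\ker B=\ker|B|^{1/2}$ one has $\tilde{B}\xi=0$, hence $\tilde{A}X\xi=0$, and since $\tilde{A}=|A|^{1/2}U|A|^{1/2}$ is a product of invertible operators this forces $X\xi=0$. With the standard convention $V|_{\ker B}=0$, both $UX$ and $XV$ vanish on $\ker B$, so the identity $AX=XB$ extends to all of $\mathscr{H}_{2}$, completing the proof.
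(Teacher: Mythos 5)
Your proof is correct, but after the common first step it follows a genuinely different route from the paper's. Both you and the authors introduce the same auxiliary operator $W=|A|^{-1/2}X|B|^{1/2}$ (your $Y$) and verify $AW=WB$; from there the paper invokes Takahashi's characterization (Lemma \ref{L1}) to conclude that $\overline{R(W)}$ reduces $A$ and $N(W)^\perp$ reduces $B$ with unitarily equivalent normal restrictions, writes $A=N\oplus S$, $B=M\oplus T$ and $X$ as a $2\times 2$ block matrix, kills the blocks $X_2,X_3,X_4$, and concludes from $NX_1=X_1M$. You instead use the FP-property only in its raw form ($Y\in\mbox{Com}(A^\ast,B^\ast)$), rerun the functional-calculus computations of Theorem \ref{C1} on $Y$ to get $|A|^{1/2}Y=Y|B|^{1/2}$ and $U|A|^{1/2}Y=YV|B|^{1/2}$, and finish by algebraic cancellation together with the observation that $X$ vanishes on $\ker B$ because $\tilde{A}$ is invertible. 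In effect you extend the paper's own remark following the theorem (the easy argument when both $A$ and $B$ are invertible) to non-invertible $B$: your route is more elementary, avoids the Takahashi lemma and the space decomposition entirely, and makes transparent exactly where invertibility of $A$ is used (to invert $|A|^{1/2}$ and $\tilde{A}$), whereas the paper's route yields the structural picture of normal direct summands. One step should be spelled out: substituting $Y$ into $|A|^{1/2}Y=Y|B|^{1/2}$ only gives $\bigl(|A|^{1/2}X-X|B|^{1/2}\bigr)|B|^{1/2}=0$, and $|B|^{1/2}$ cannot simply be cancelled on the right; the global identity $|A|^{1/2}X=X|B|^{1/2}$ follows either from the second relation, $UX|B|^{1/2}=|A|^{-1/2}X\tilde{B}=U|A|^{1/2}X$ (feeding $X\tilde{B}=\tilde{A}X$ back in and cancelling the unitary $U$), or by combining the restricted identity on $\overline{R(|B|^{1/2})}=(\ker B)^\perp$ with your kernel observation $X|_{\ker B}=0$; both ingredients already appear in your proposal, so the argument closes.
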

\begin{proof} It is sufficient to prove that $\mbox{Com}(A,B)\supseteq\mbox{Com}(\tilde{A},\tilde{B})$.
Let $A=U|A|$ and $B=V|B|$ be the polar decompositions of $A$ and
$B$, respectively and $X\in{Com}(\tilde{A},\tilde{B})$. Let
$W=|A|^{-1\over 2}X|B|^{1\over2}$. Since $\tilde{A}X=X\tilde{B}$, we
have
$$|A|^{-1\over2}\tilde{A}X|B|^{1\over2}=|A|^{-1\over2}X\tilde{B}|B|^{1\over2}\,,$$
$$\:\:\:\:\:\:\:\:\:\:\:\:\:\:\:\:\:\:\:U|A|^{1\over2}X|B|^{1\over2}=|A|^{-1\over2}X|B|^{1\over2}V|B|^
{1\over2}|B|^{1\over2}\,,$$
$$U|A||A|^{-1\over2}X|B|^{1\over2}=|A|^{-1\over2}X|B|^{1\over2}V|B|\,,$$
$$AW=WB.$$
Hence by hypothesis and Lemma \ref{L1}, $\overline{R(W)}$ reduces
$A$, $N(W)^\bot$ reduces $B$ and $A|_{\overline{R(W)}}$ and
$B|_{N(W)^\bot}$ are normal operators. Therefore
$$A=N\oplus S\,\,\, \mbox{on\,}\,\overline{R(W)}\oplus R(W)^\bot $$
and
$$B=M\oplus T\,\,\mbox{on\,}\,N(W)^\bot\oplus N(W)\,,$$
where $N$ and $M$ are unitarily equivalent normal operators.
Operator $A$ is invertible and so are $N$ and $S$. Since $N$ and $M$
are unitarily equivalent, $M$ is invertible. Let
$$X=\left(
     \begin{array}{cc}
      X_1 & X_2 \\
      X_3 & X_4 \\
     \end{array}
    \right)
\,\,\mbox{and}\,\, W=\left(
    \begin{array}{cc}
     W_1 & 0 \\
     0 & 0 \\
    \end{array}
   \right)
$$
with respect to $\mathscr{H}_1=\overline{R(W)}\oplus R(W)^\bot$ and $\mathscr{H}_2=N(W)^\bot\oplus N(W)$. Clearly
$|A|^{-1}=|N|^{-1}\oplus|S|^{-1}$. It follows from $W=|A|^{-1\over 2}X|B|^{1\over2}$ that
$$\left(
  \begin{array}{cc}
   W_1 & 0 \\
   0 & 0 \\
  \end{array}
 \right)
=\left(
 \begin{array}{cc}
  |N|^{-1\over2}X_1|M|^{1\over2} & |N|^{-1\over2}X_2|T|^{1\over2} \\
  |S|^{-1\over2}X_3|M|^{1\over2} & |S|^{-1\over2}X_4|T|^{1\over2} \\
 \end{array}
\right).
$$
Hence $X_2|T|^{1\over2}=0$, $X_3=0$, $X_4|T|^{1\over2}=0$ so
$X_2\tilde{T}=0$ and $X_4\tilde{T}=0$. Then $\tilde{A}X=X\tilde{B}$
implies that
$$\left(
  \begin{array}{cc}
   NX_1 & NX_2 \\
   0 & \tilde{S}X_4 \\
  \end{array}
 \right)
=\left(
 \begin{array}{cc}
  X_1M & 0 \\
  0 & 0 \\
 \end{array}
\right).
$$
Hence $X_2=0$ and $X_4=0$. Since $\tilde{A}=N\oplus \tilde{S}$ and
$\tilde{B}=M\oplus\tilde{T}$ and $\tilde{A}X=X\tilde{B}$ and
$X=X_1\oplus0$, we have $NX_1=X_1M$ and this, in turn, implies that
$AX=XB$.
\end{proof}
\begin{remark}
In the preceding theorem if we assume that both $A$ and $B$ are
invertible, then we can easily prove the theorem. To see this, let
$Y\in\mbox{Com}(\tilde{A},\tilde{B})$. Then by Lemma \ref{A3}{\rm
(i)} $|A|^{-1\over2}Y|B|^{1\over2}\in\mbox{Com}(A,B).$ It follows
from the FP-property for $(A,B)$ and Remark \ref{A2}
$$Y=|A|^{1\over2}|A|^{-1\over2}Y|B|^{1\over2}|B|^{-1\over2}=|A|^{-1\over2}Y|B|^{1\over2}\in\mbox{Com}(A,B).$$
\end{remark}
\begin{corollary}
Let $A\in \mathbb{B}(\mathscr{H}_1)$ be log-hyponormal operator and
$B^*\in \mathbb{B}(\mathscr{H}_2)$ be either p-hyponormal or
log-hyponormal
 operator, then $\mbox{Com}(A,B)=\mbox{Com}(\tilde{A},\tilde{B})$.
\end{corollary}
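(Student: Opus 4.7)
The plan is to assemble two ingredients already established in the paper. First, I would invoke Lemma \ref{L3}, whose hypotheses (each of $A$ and $B^{\ast}$ is log-hyponormal or $p$-hyponormal) are precisely those of the corollary; it immediately yields that the pair $(A,B)$ enjoys the FP-property. Second, I would note that the definition of log-hyponormality recorded in the preliminaries explicitly includes invertibility of the operator (``If $A$ is an invertible operator satisfying $\log(A^{\ast}A)\geq\log(AA^{\ast})$\ldots''), so the assumption that $A$ is log-hyponormal already gives that $A$ is invertible.

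With these two facts in place, the hypotheses of Theorem \ref{C3} are met: $A\in\mathbb{B}(\mathscr{H}_{1})$ is invertible, $B\in\mathbb{B}(\mathscr{H}_{2})$ is arbitrary, and $(A,B)$ has the FP-property. Its conclusion is exactly $\mbox{Com}(A,B)=\mbox{Com}(\tilde{A},\tilde{B})$, which is the statement of the corollary.

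There is essentially no obstacle: the corollary is a clean packaging of Lemma \ref{L3} with Theorem \ref{C3}. The only point worth being careful about is that the hypothesis of log-hyponormality on $A$ (rather than a combined $p$-hyponormal/log-hyponormal alternative) is what supplies the invertibility of $A$ needed to invoke Theorem \ref{C3}; note that no invertibility assumption on $B$ is required, consistent with ``$B$ be arbitrary'' in that theorem.
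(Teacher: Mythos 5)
Your proof is correct and is exactly the paper's argument: the paper also proves this corollary by combining Lemma \ref{L3} (giving the FP-property for $(A,B)$) with Theorem \ref{C3}, where the invertibility of $A$ needed there is built into the definition of log-hyponormality. Your explicit remark on why $A$ must be log-hyponormal (rather than merely $p$-hyponormal) is a nice clarification of the same route.
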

\begin{proof}
It follows from Lemma \ref{L3} and Theorem \ref{C3}.
\end{proof}

\begin{corollary}
Let $A\in \mathbb{B}(\mathscr{H}_1)$ and $B\in
\mathbb{B}(\mathscr{H}_2)$ be invertible operators. If $(A,B)$ has
the FP-property and $n\in\mathbb{N}$, then
$\mbox{Com}(\Delta_n(A),\Delta_n(B))=\mbox{Com}(A,B)$.
\end{corollary}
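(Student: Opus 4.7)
The plan is to proceed by induction on $n$, using Theorem~\ref{C3} at each step. The only things that need to be verified beyond a direct invocation of Theorem~\ref{C3} are that invertibility and the FP-property both survive each iteration of the Aluthge transform, so that the hypotheses of Theorem~\ref{C3} remain available at the inductive step.

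First I would handle the base case $n=1$. Since $A$ and $B$ are invertible and $(A,B)$ has the FP-property, Theorem~\ref{C3} applies directly and gives $\mathrm{Com}(A,B)=\mathrm{Com}(\tilde A,\tilde B)=\mathrm{Com}(\Delta_1(A),\Delta_1(B))$.

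For the inductive step, assuming the result for $n$, I would note two stability facts. First, the Aluthge transform of an invertible operator is invertible: if $C=U|C|$ with $C$ invertible, then $U$ is unitary and $|C|$ is invertible, hence $\tilde C=|C|^{1/2}U|C|^{1/2}$ is a product of invertibles. Iterating, $\Delta_n(A)$ and $\Delta_n(B)$ are invertible. Second, Corollary~\ref{A6} (a consequence of Corollary~\ref{DD} iterated) tells us that $(\Delta_n(A),\Delta_n(B))$ inherits the FP-property from $(A,B)$. Thus the pair $(\Delta_n(A),\Delta_n(B))$ satisfies exactly the hypotheses of Theorem~\ref{C3}, so
\[
\mathrm{Com}(\Delta_n(A),\Delta_n(B))=\mathrm{Com}(\widetilde{\Delta_n(A)},\widetilde{\Delta_n(B)})=\mathrm{Com}(\Delta_{n+1}(A),\Delta_{n+1}(B)).
\]
Combining this with the inductive hypothesis $\mathrm{Com}(\Delta_n(A),\Delta_n(B))=\mathrm{Com}(A,B)$ completes the step.

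There is no real obstacle here beyond checking that the two stability observations above are legitimate; both follow immediately from results already stated in the paper (invertibility of polar factors in the invertible case, and Corollaries~\ref{DD}--\ref{A6}). So the whole argument is a straightforward induction using Theorem~\ref{C3} as the engine.
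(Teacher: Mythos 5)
Your proof is correct and takes essentially the same route as the paper: the paper also iterates Theorem \ref{C3}, invoking Corollary \ref{A6} to retain the FP-property at each stage (it just phrases the iteration as ``repeating the process'' from $\Delta_n$ down to $A,B$ rather than as induction upward, and leaves the preservation of invertibility under the Aluthge transform implicit, which you spell out). No gaps.
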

\begin{proof}
Let $\Delta_n(A)Y=Y\Delta_n(B)$ for some
$Y\in\mathbb{B}(\mathscr{H}_2,\mathscr{H}_1)$. By Corollary
\ref{A6}, we see that $(\Delta_{n-1}(A),\Delta_{n-1}(B))$ has the
FP-property, so by Theorem \ref{C3} we have
$\Delta_{n-1}(A)Y=Y\Delta_{n-1}(B)$. Repeating this process we
conclude the result as desired. For the revers similar argument can
be applied.
\end{proof}

\section{Fuglede--Putnam--Aluthge problem modulo trace ideals}

In this section, we present some results about the
Fuglede--Putnam--Aluthge problem modulo trace ideals. We obtain some
inequalities related to this problem by using some ideas of
\cite{KIT1}.

\begin{lemma}\label{K1}
Let $A=U|A|$ be the polar decomposition of $A$ and $X\in
\mathbb{B}(\mathscr{H})$ be a self-adjoint operator such that
$\mbox{Re}( U|A|^{1\over2})\geq a>0$ and $U^* X=XU$. Then
$$\|\tilde{A}^* X-X\tilde{A}\|_p\geq 2a\|\,|A|^{1\over2}X-X|A|^{1\over2}\|_p$$
for $1\leq p\leq\infty.$
\end{lemma}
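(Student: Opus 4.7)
The plan is to reduce the inequality to a Sylvester-type commutator equation and invert it via an exponential-decay integral representation.

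Writing $P=|A|^{1/2}$, $S=U|A|^{1/2}$, and $T=|A|^{1/2}X-X|A|^{1/2}=PX-XP$, my first step is an algebraic identity. Expand
\[
\tilde{A}^{*}X-X\tilde{A}=PU^{*}PX-XPUP=PU^{*}(PX-XP)+(PX-XP)UP+P(U^{*}X-XU)P,
\]
and note that the last term vanishes by the hypothesis $U^{*}X=XU$. This leaves $\tilde{A}^{*}X-X\tilde{A}=S^{*}T+TS$, so the lemma reduces to the norm bound $\|S^{*}T+TS\|_p\ge 2a\|T\|_p$ under the single assumption $\mathrm{Re}(S)\ge aI>0$. (Notice that self-adjointness of $X$ plays no role in this identity.)

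Next, I would record the standard semigroup estimate: writing $H=\mathrm{Re}(S)\ge aI$, differentiation of $\|e^{-tS}u\|^{2}$ gives $\frac{d}{dt}\|e^{-tS}u\|^{2}=-2\mathrm{Re}\langle Se^{-tS}u,e^{-tS}u\rangle\le -2a\|e^{-tS}u\|^{2}$, whence $\|e^{-tS}\|\le e^{-at}$ for $t\ge 0$, and likewise $\|e^{-tS^{*}}\|\le e^{-at}$. The key computation is then
\[
\frac{d}{dt}\bigl(e^{-tS^{*}}T\,e^{-tS}\bigr)=-\,e^{-tS^{*}}(S^{*}T+TS)\,e^{-tS},
\]
and since the left-hand side has norm $\le e^{-2at}\|T\|\to 0$, integrating from $0$ to $\infty$ yields the representation
\[
T=\int_{0}^{\infty}e^{-tS^{*}}(S^{*}T+TS)\,e^{-tS}\,dt.
\]
Applying the Schatten-ideal bound $\|AYB\|_{p}\le\|A\|\,\|Y\|_{p}\,\|B\|$ to the integrand together with the exponential decay gives
\[
\|T\|_{p}\le\|S^{*}T+TS\|_{p}\int_{0}^{\infty}e^{-2at}\,dt=\frac{\|S^{*}T+TS\|_{p}}{2a},
\]
which is the desired inequality.

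The main technical point will be justifying the Bochner integral in $\mathcal{C}_{p}$ for all $1\le p\le\infty$ and the vanishing boundary term at infinity; both follow routinely from the exponential decay of Step~2. If $\|S^{*}T+TS\|_{p}=\infty$ the inequality is trivial, while otherwise the integral representation itself certifies that $T\in\mathcal{C}_{p}$ and delivers the announced bound.
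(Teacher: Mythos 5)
Your proof is correct, but it takes a genuinely different route from the paper's. You share the paper's starting point, namely the identity $\tilde{A}^*X-X\tilde{A}=S^*T+TS$ with $S=U|A|^{1/2}$ and $T=|A|^{1/2}X-X|A|^{1/2}$, the cross term dying because $U^*X=XU$; after that the arguments diverge. The paper exploits that $T$ is skew-adjoint (this is exactly where the self-adjointness of $X$ enters): for $p=\infty$ it takes approximate eigenvectors $f_n$ of $T$ at a spectral point $t$ with $|t|=\|T\|_\infty$ and bounds $|\langle(\tilde{A}^*X-X\tilde{A})f_n,f_n\rangle|$ from below via $\langle(|A|^{1/2}U^*+U|A|^{1/2})f_n,f_n\rangle\geq 2a$; for $1\leq p<\infty$ it first passes to the Calkin algebra to see that $T$ is compact, diagonalizes the compact skew-adjoint $T$ in an orthonormal basis $\{e_n\}$, and uses $\|Y\|_p^p\geq\sum_n|\langle Ye_n,e_n\rangle|^p$. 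You instead invert the accretive Sylvester map $T\mapsto S^*T+TS$ through the semigroup representation $T=\int_0^\infty e^{-tS^*}(S^*T+TS)e^{-tS}\,dt$, justified by $\|e^{-tS}\|\leq e^{-at}$ and the vanishing boundary term; the Schatten bound then drops out of the triangle inequality for the Bochner integral. Your route handles all $1\leq p\leq\infty$ in one stroke, needs no compactness, no Calkin-algebra detour and no diagonalization, and, as you observe, never uses self-adjointness of $X$; consequently it also proves the two-operator theorem that follows the lemma directly (take $S_1=U|A|^{1/2}$, $S_2=V|B|^{1/2}$ and $T=|A|^{1/2}X-X|B|^{1/2}$), bypassing the $2\times2$ matrix trick. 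The deferred technical points (norm-continuity of the integrand in $\mathcal{C}_p$, exponential decay, and the convention $\|Y\|_p=\infty$ for $Y\notin\mathcal{C}_p$) are indeed routine, so the argument is complete; the paper's proof, on the other hand, is more elementary in that it avoids operator semigroups and vector-valued integration.
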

\begin{proof}
We consider two
cases:

Case (i). $p=\infty$. Clearly
$(|A|^{1\over2}X-X|A|^{1\over2})^*=-(|A|^{1\over2}X-X|A|^{1\over2})$.
It follows from \cite[ Theorem 2.4.1.16]{FUR} that there exist a
sequence $\{f_n\}_{n\in \mathbb{N}}$ of unit vectors in
$\mathscr{H}$ and number $t\in {\rm
sp}(|A|^{1\over2}X-X|A|^{1\over2})$ such that $\overline{t}=-t$,
$(|A|^{1\over2}X-X|A|^{1\over2}-t)f_n\rightarrow0$ as
$n\rightarrow\infty$ and
$|t|=\|\,|A|^{1\over2}X-X|A|^{1\over2}\|_{\infty}$. Now
\begin{eqnarray*}
&&\hspace{-2.5cm}\|\tilde{A}^* X-X\tilde{A}\|_{\infty}\\
&\geq& |\langle\tilde{A}^*X-X\tilde{A}f_n,f_n\rangle|\\
&=&|\langle|A|^{1\over2}U^*|A|^{1\over2}X-X|A|^{1\over2}U|A|^{1\over2}f_n,f_n\rangle|\\
&=&|\langle(|A|^{1\over2}U^*(|A|^{1\over2}X-X|A|^{1\over2})+(|A|^{1\over2}X-X
|A|^{1\over2})U|A|^{1\over2}\\
&&+|A|^{1\over2}(U^*
X-XU)|A|^{1\over2})f_n,f_n\rangle|\\
&=&|\langle|A|^{1\over2}U^*(|A|^{1\over2}X-X|A|^{1\over2}-t)f_n,f_n\rangle\\
&&+
\langle(|A|^{1\over2}X-X|A|^{1\over2}-t)U|A|^{1\over2}f_n,f_n\rangle+t\langle|A|^{1\over2}U^*+
U|A|^{1\over2}f_n,f_n\rangle|\\
&\geq& |t|\langle|A|^{1\over2}U^*+U|A|^{1\over2}f_n,f_n\rangle-|\langle|A|^{1\over2}U^*
(|A|^{1\over2}X-X|A|^{1\over2}-t)f_n,f_n\rangle\\
&&+\langle(|A|^{1\over2}X-X|A|^{1\over2}-t)U|A|^{1\over2}f_n,f_n\rangle|.
\end{eqnarray*}
We observe that
$$|\langle|A|^{1\over2}U^*(|A|^{1\over2}X-X|A|^{1\over2}-t)f_n,f_n\rangle+
\langle(|A|^{1\over2}X-X|A|^{1\over2}-t)U|A|^{1\over2}f_n,f_n\rangle|\to0$$
as $n\rightarrow\infty$. Hence $$\|\tilde{A}^*
X-X\tilde{A}\|_{\infty}\geq
2a\|\,|A|^{1\over2}X-X|A|^{1\over2}\|_{\infty}.$$

 Case (ii). $1\leq p<\infty$.
We can assume that $\tilde{A}^* X-X\tilde{A}\in \mathcal{C}_p$ and
hence it is compact. If
$\pi:\mathbb{B}(\mathscr{H})\rightarrow{\mathbb{B}(\mathscr{H})\over{\mathcal{C}_\infty}}$
is the quotient map then we have $\pi(\tilde{A}^* X-X\tilde{A})=0$.
It is obvious that $\pi(A)=\pi(U)\pi(|A|)$ is the polar
decomposition of $\pi(A)$. Since $U^* X=XU$ we have $\pi(U^*
)\pi(X)=\pi(X)\pi(U)$. Hence $\pi(|A|^{1\over2}X-X|A|^{1\over2})=0$
by Case (i). So $|A|^{1\over2}X-X|A|^{1\over2}$ is a compact normal
operator. It is therefore diagonalizable and hence there exist an
orthonormal basis $\{e_n\}_{n\in\mathbb{N}}$ of $\mathscr{H}$ and
numbers $t_n$ such that $(|A|^{1\over2}X-X|A|^{1\over2})e_n=t_ne_n$.
Thus the $|t_n|$'s are the singular values of
$|A|^{1\over2}X-X|A|^{1\over2}$ and
\begin{eqnarray*}
\|\tilde{A}^* X-X\tilde{A}\|_p^p&\geq&\sum_{n=1}^\infty|\langle\tilde{A}^*
X-X\tilde{A}e_n,e_n\rangle|^p\\
&=&\sum_{n=1}^\infty|\langle|A|^{1\over2}U^*|A|^{1\over2}X-X|A|^{1\over2}U|A|^{1\over2}e_n,e_n\rangle|^p\\
&=&\sum_{n=1}^\infty|\langle(|A|^{1\over2}U^*(|A|^{1\over2}X-X|A|^{1\over2})+(|A|^{1\over2}X-X
|A|^{1\over2})U|A|^{1\over2}\\
&&+|A|^{1\over2}(U^*X-XU)|A|^{1\over2})e_n,e_n\rangle|^p\\
&=&\sum_{n=1}^\infty|t_n|^p|(\langle|A|^{1\over2}U^*+U|A|^{1\over2}e_n,e_n\rangle)|^p\\
&\geq&(\sum_{n=1}^\infty|t_n|^p)(2a)^p=(2a)^p\|\,|A|^{1\over2}X-X|A|^{1\over2}\|_p^p\,.
\end{eqnarray*}
Thus
\begin{eqnarray*}\label{is2}
\|\tilde{A}^* X-X\tilde{A}\|_p\geq
2a\|\,|A|^{1\over2}X-X|A|^{1\over2}\|_p\,.
\end{eqnarray*}

\end{proof}
Now we get our last main result.
\begin{theorem}\
Let $A=U|A|$ and $B=V|B|$ be the polar decompositions of $A$ and
$B$, respectively, and $X\in \mathbb{B}(\mathscr{H})$ such that
$\mbox{Re}(U|A|^{1\over2})\geq a>0$ and
$\mbox{Re}(V|B|^{1\over2})\geq a>0$ and $U^* X=XV$. Then
$$\|\tilde{A}^*
 X-X\tilde{B}\|_p\geq2a\|\,|A|^{1\over2}X-X|B|^{1\over2}\|_p$$
for $1\leq p\leq \infty$.
\end{theorem}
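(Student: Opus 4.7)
My plan is to reduce the two-operator statement to the single-operator Lemma \ref{K1} by means of a standard $2\times 2$ block-matrix construction. On $\mathscr{H}\oplus\mathscr{H}$ I consider the operators
\[
T=\left(\begin{array}{cc} A & 0 \\ 0 & B\end{array}\right),\ W=\left(\begin{array}{cc} U & 0 \\ 0 & V\end{array}\right),\ P=\left(\begin{array}{cc} |A| & 0 \\ 0 & |B|\end{array}\right),\ Y=\left(\begin{array}{cc} 0 & X \\ X^\ast & 0\end{array}\right).
\]
Since $T^\ast T=P^2$ and $\ker W=\ker T$, the identity $T=WP$ is the polar decomposition of $T$, and hence $\tilde{T}=P^{1/2}WP^{1/2}$ is block-diagonal with blocks $\tilde{A}$ and $\tilde{B}$. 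The operator $Y$ is self-adjoint.

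The next step is to verify the hypotheses of Lemma \ref{K1} for the pair $(T,Y)$. A direct block computation gives
\[
W^\ast Y-YW=\left(\begin{array}{cc} 0 & U^\ast X-XV \\ V^\ast X^\ast-X^\ast U & 0\end{array}\right),
\]
and since the $(2,1)$-entry is the adjoint of the $(1,2)$-entry, the hypothesis $U^\ast X=XV$ forces $W^\ast Y=YW$. Also, $\mbox{Re}(WP^{1/2})$ is block-diagonal with diagonal entries $\mbox{Re}(U|A|^{1/2})$ and $\mbox{Re}(V|B|^{1/2})$, both of which dominate $a$, so $\mbox{Re}(WP^{1/2})\geq aI$. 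Lemma \ref{K1} therefore applies to $(T,Y)$ and yields
\[
\|\tilde{T}^\ast Y-Y\tilde{T}\|_p\geq 2a\,\|P^{1/2}Y-YP^{1/2}\|_p
\]
for every $1\leq p\leq\infty$.

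What remains is purely bookkeeping. Expanding the blocks shows that both $\tilde{T}^\ast Y-Y\tilde{T}$ and $P^{1/2}Y-YP^{1/2}$ are purely off-diagonal, with $(1,2)$-entries $\tilde{A}^\ast X-X\tilde{B}$ and $|A|^{1/2}X-X|B|^{1/2}$ respectively, and with $(2,1)$-entries equal to minus the adjoints of the corresponding $(1,2)$-entries. In particular, within each block operator the two off-diagonal entries have equal Schatten $p$-norms, and by \eqref{1new} the factor $2^{1/p}$ appears on both sides of the lemma's inequality (understood as $1$ when $p=\infty$, where the $\max$ equals the common value). Cancelling this common factor produces the desired inequality.

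The analytic content is entirely provided by Lemma \ref{K1}, and no invertibility hypothesis on $A$ or $B$ is needed; the real-part positivity assumption transfers verbatim to the block operator $T$. The main point that must be handled carefully is confirming that the $(2,1)$-blocks are genuinely the adjoints of the $(1,2)$-blocks (up to sign), so that the $2^{1/p}$ factors on the two sides match and cancel cleanly---a straightforward but essential verification.
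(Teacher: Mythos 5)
Your proposal is correct and follows essentially the same route as the paper: the block dilation $T=A\oplus B$, $Y=\left(\begin{smallmatrix}0 & X\\ X^{\ast} & 0\end{smallmatrix}\right)$, an application of Lemma \ref{K1} to the pair $(T,Y)$, and extraction of the component inequality via \eqref{1new}. Your explicit bookkeeping of the $2^{1/p}$ factors (and the adjoint relation between the off-diagonal blocks) is in fact slightly more careful than the paper's final display, but the argument is the same.
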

\begin{proof}
Let $T= \left(
 \begin{array}{cc}
  A & 0 \\
  0 & B \\
 \end{array}
\right)$ and $Y= \left(
 \begin{array}{cc}
  0 & X \\
  X^* & 0 \\
 \end{array}
\right)$. Then $Y$ is self-adjoint. Let
$T=W|T|$ be the polar decomposition of $T$. Note that
$W=\left(
 \begin{array}{cc}
  U & 0 \\
  0 & V \\
 \end{array}
 \right)$ and hence $W^* Y=YW$ by the assumption $U^* X=XV$.

\noindent Also $W|T|^{1\over 2}=\left(
 \begin{array}{cc}
  U|A|^{1\over2} & 0 \\
  0 & V|B|^{1\over2} \\
 \end{array}
 \right)\geq a\geq0$ so we have $\|\tilde{T}^* Y-Y\tilde{T}\|_p\geq 2a\|\,|T|^{1\over2}Y-Y|T|^{1\over2}\|_p$
 by Lemma \ref{K1}. Since
 $\tilde{T}=\left(\begin{array}{cc}
  \tilde{A} & 0 \\
  0 & \tilde{B} \\
 \end{array}
 \right)$ and $|T|^{1\over2}=\left(\begin{array}{cc}
  |A|^{1\over2} & 0 \\
  0 & |B|^{1\over2} \\
 \end{array}
 \right)$, a simple computation shows that
\begin{eqnarray*}
&\left\|\left(\begin{array}{cc}
  0 & \tilde{A}^* X-X\tilde{B} \\
  \tilde{B}^* X^*-X^*\tilde{A} & 0 \\
 \end{array}
 \right)\right\|_p^p\\
 &\geq2^pa^p\left\|\left(\begin{array}{cc}
  0 & |A|^{1\over2}X-X|B|^{1\over2} \\
  |B|^{1\over2}X^*-X^*|A|^{1\over2} & 0 \\
 \end{array}
 \right)\right\|_p.
\end{eqnarray*}
Utilizing \eqref{1new} we obtain $$\|\tilde{A}^*
 X-X\tilde{B}\|_p\geq2a\|\,|A|^{1\over2}X-X|B|^{1\over2}\|_p\,.$$
\end{proof}

\begin{corollary}
Let $A=U|A|$ and $B=V|B|$ be the polar decompositions of $A$ and
$B$, respectively, and $X\in \mathbb{B}(\mathscr{H})$ such that
$\mbox{Re}(U|A|^{1\over2})\geq a>0$ and
$\mbox{Re}(V|B|^{1\over2})\geq a>0$ and $U^* X=XV$ and $\tilde{A}^*
X-X\tilde{B}\in \mathcal{C}_p $ for some $1\leq p\leq\infty$. Then
$|A|^{1\over2}X-X|B|^{1\over2}\in \mathcal{C}_p.$
\end{corollary}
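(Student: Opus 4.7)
The plan is to read the corollary off directly from the Schatten $p$-norm inequality just established, handling $1\leq p<\infty$ by the inequality itself and then addressing the endpoint $p=\infty$ with a short compactness argument.

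For $1\leq p<\infty$, membership in $\mathcal{C}_p$ is, by definition, equivalent to having finite Schatten $p$-norm. Since $\tilde{A}^*X-X\tilde{B}\in\mathcal{C}_p$ means $\|\tilde{A}^*X-X\tilde{B}\|_p<\infty$, the preceding theorem immediately gives
$$\|\,|A|^{1\over2}X-X|B|^{1\over2}\|_p\leq\frac{1}{2a}\|\tilde{A}^*X-X\tilde{B}\|_p<\infty,$$
so that $|A|^{1\over2}X-X|B|^{1\over2}\in\mathcal{C}_p$.

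For $p=\infty$ we have $\mathcal{C}_\infty=\mathbb{K}(\mathscr{H})$, and the operator-norm inequality by itself does not produce compactness. In this case I would imitate the Calkin-algebra device used in Case (ii) of Lemma \ref{K1}. Assemble the block operators $T=\left(\begin{array}{cc} A & 0 \\ 0 & B \end{array}\right)$ and $Y=\left(\begin{array}{cc} 0 & X \\ X^* & 0 \end{array}\right)$ on $\mathscr{H}\oplus\mathscr{H}$ exactly as in the proof of the theorem, with polar decomposition $T=W|T|$, $W=U\oplus V$. By the block-norm identity \eqref{1new}, $\tilde{A}^*X-X\tilde{B}$ is compact if and only if $\tilde{T}^*Y-Y\tilde{T}$ is compact, and analogously $|A|^{1/2}X-X|B|^{1/2}$ is compact if and only if $|T|^{1/2}Y-Y|T|^{1/2}$ is compact. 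Push everything through the quotient $\pi$ onto the Calkin algebra of $\mathscr{H}\oplus\mathscr{H}$: the hypotheses $W^*Y=YW$ and $\mbox{Re}(W|T|^{1/2})\geq a>0$ survive as $\pi(W)^*\pi(Y)=\pi(Y)\pi(W)$ and $\mbox{Re}\bigl(\pi(W)|\pi(T)|^{1/2}\bigr)\geq a$, while $\pi(\tilde{T}^*Y-Y\tilde{T})=0$ because $\tilde{T}^*Y-Y\tilde{T}$ is compact. Running the approximate-eigenvector argument of Case (i) of Lemma \ref{K1} inside a faithful representation of the Calkin algebra then yields $\pi(|T|^{1/2}Y-Y|T|^{1/2})=0$, so $|T|^{1/2}Y-Y|T|^{1/2}$ is compact and therefore $|A|^{1/2}X-X|B|^{1/2}\in\mathcal{C}_\infty$.

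The finite-$p$ case is essentially immediate from the theorem. The only genuinely new (and still mild) point is ensuring that the approximate-eigenvector computation of Case (i) of Lemma \ref{K1} transfers to the Calkin algebra; this is the sole obstacle and is handled in precisely the same way as in Case (ii) of that lemma.
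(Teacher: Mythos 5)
Your proposal is correct. The paper states this corollary without proof, as an immediate consequence of the theorem, and your treatment of $1\leq p<\infty$ is exactly that reading (granting the usual convention that $\|C\|_p=\infty$ when $C\notin\mathcal{C}_p$; if one insists, as the paper does, on defining $\|\cdot\|_p$ only for compact operators, then the honest route is to note that Case (ii) of Lemma \ref{K1}, applied to the block operators $T$ and $Y$ as in the theorem's proof, first produces compactness of $|T|^{1/2}Y-Y|T|^{1/2}$ via the Calkin quotient and only then the norm bound, so membership is already built into the proof of the theorem). Where you go beyond the paper is the endpoint $p=\infty$: you correctly observe that the operator-norm inequality alone cannot yield compactness, and you close this gap with the quotient-map argument on $\mathscr{H}\oplus\mathscr{H}$ --- which is precisely the device the paper itself uses inside Case (ii) of Lemma \ref{K1}, so your argument stays within the paper's toolkit (one only needs that Case (i) is an algebraic computation using $\mbox{Re}(\pi(W)\pi(|T|)^{1/2})\geq a$, $\pi(W)^*\pi(Y)=\pi(Y)\pi(W)$ and a faithful representation of the Calkin algebra, not the partial-isometry structure of a genuine polar decomposition). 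In short, your write-up is a correct and slightly more careful version of what the paper leaves implicit.
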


\begin{corollary}\label{K2}
Let $A=U|A|$ and $B=V|B|$ be the polar decompositions of $A$ and
$B$, respectively, and $X\in \mathbb{B}(\mathscr{H})$ such that
$\mbox{Re}(U|A|^{1\over2})\geq a>0$ and
$\mbox{Re}(V|B|^{1\over2})\geq a>0$ and $U^* X=XV$ and $\tilde{A}^*
X=X\tilde{B}$, then $|A|X=X|B|$.
\end{corollary}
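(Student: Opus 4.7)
The plan is to recognize that this corollary follows directly by feeding the hypothesis $\tilde{A}^*X=X\tilde{B}$ into the preceding Theorem. The hypotheses of that Theorem---polar decompositions $A=U|A|$, $B=V|B|$, the real-part conditions $\mathrm{Re}(U|A|^{1/2})\geq a>0$ and $\mathrm{Re}(V|B|^{1/2})\geq a>0$, and the intertwining $U^*X=XV$---are exactly the hypotheses of the Corollary, so no additional work is needed to invoke the inequality.

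First I would apply the Theorem with $p=\infty$; this is the natural choice because we make no Schatten-class assumption on $X$, yet the operator norm is defined unconditionally (and is trivially zero on the left-hand side under our hypothesis). From
\[
0=\|\tilde{A}^*X-X\tilde{B}\|_{\infty}\geq 2a\bigl\|\,|A|^{1/2}X-X|B|^{1/2}\bigr\|_{\infty}
\]
and $a>0$, I would deduce the ``half-power'' intertwining $|A|^{1/2}X=X|B|^{1/2}$.

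To pass from half powers to full powers, I would multiply this identity on the left by $|A|^{1/2}$ and, separately, on the right by $|B|^{1/2}$, and chain the two resulting equalities:
\[
|A|X=|A|^{1/2}\bigl(|A|^{1/2}X\bigr)=|A|^{1/2}X|B|^{1/2}=\bigl(X|B|^{1/2}\bigr)|B|^{1/2}=X|B|,
\]
which is the desired conclusion. There is no real obstacle; the corollary is essentially a one-line consequence of the Theorem, and the only tiny gap---that the inequality yields $|A|^{1/2}X=X|B|^{1/2}$ rather than $|A|X=X|B|$ directly---is bridged by this immediate algebraic step using $|A|^{1/2}|A|^{1/2}=|A|$ and $|B|^{1/2}|B|^{1/2}=|B|$.
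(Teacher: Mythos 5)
Your proposal is correct and follows the paper's (implicit) argument exactly: the corollary is stated without proof as an immediate consequence of the preceding theorem, applied with the left-hand side $\|\tilde{A}^*X-X\tilde{B}\|_p=0$ to get $|A|^{1/2}X=X|B|^{1/2}$, and then upgraded to $|A|X=X|B|$ by the same squaring step $|A|X=|A|^{1/2}\bigl(|A|^{1/2}X\bigr)=X|B|^{1/2}|B|^{1/2}=X|B|$ that you describe. Your choice of $p=\infty$ is a sensible way to make the application unconditional, since no Schatten-class membership is assumed.
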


\begin{remark}Under the conditions of Corollary \ref{K2} we have
$$A^*|A|X=|A|U^* |A|X=|A|U^* X|B|=|A|XV|B|=|A|XB.$$
Hence there exists an operator $Y(=|A|X)$ such that $A^* Y=YB$.
\end{remark}

\begin{remark}
For $\delta>0$, let $\mbox{Com}_\delta(A,B)$ be the set of
all operators $X\in\mathbb{B}(\mathscr{H}_2,\mathscr{H}_1)$ such
that $\|AX-XB\|\leq\delta$. Moore \cite{MOO} proved that for any $\varepsilon >0$ there exists $\delta >0$ such that
$$\mbox{Com}_\delta(A,B)\cap \mathbb{B}(\mathscr{H})_1 \subseteq \mbox{Com}_\varepsilon(A^*,B^*)\,,$$
where $ \mathbb{B}(\mathscr{H})_1$ denotes the closed norm-unit ball of $ \mathbb{B}(\mathscr{H})$.
Let $A$ be an operator with the polar decomposition $U|A|$ and
$X\in\mbox{Com}_\delta(|A|^{1\over2},|A|^{1\over2})\bigcap\mbox{Com}_\delta(U^\ast,U)$
for some $\delta>0$. From
\begin{eqnarray*}\|\tilde{A}^\ast
X-X\tilde{A}\|&=&\||A|^{1\over2}U^\ast(|A|^{1\over2}X-X|A|^{1\over2})\\&+&(|A|^{1\over2}X-X|A|^{1\over2})U|A|^{1\over2}+|A|^{1\over2}(U^\ast
X-XU)|A|^{1\over2}\|\\&\leq&(\||A|^{1\over2}U^\ast\|+\|U|A|^{1\over2}\|+\|A\|)\delta\\&=&(2\|A\|^{1\over2}+\|A\|)\delta
\end{eqnarray*}
we can see that
$X\in\mbox{Com}_{\varphi_A(\delta)}(\tilde{A}^\ast,\tilde{A})$ for some positive increasing function $\varphi_A(t)$ on $(0,\infty)$. Thus
$$\mbox{Com}_\delta(|A|^{1\over2},|A|^{1\over2})\bigcap\mbox{Com}_\delta(U^\ast,U)\subseteq\mbox{Com}_{\varphi_A(\delta)}(\tilde{A}^\ast,\tilde{A}).$$
Now by Lemma \ref{K1} if $\mbox{Re}(U|A|^{1/2})\geq a>0$ it is easy
to see that
$$\mbox{Com}_{\varphi_A(\delta)}(\tilde{A}^\ast,\tilde{A})\bigcap\mbox{Com}(U^\ast,U)\subseteq\mbox{Com}_{\psi(\delta)}(|A|^{1\over2},|A|^{1\over2})\,,$$
where $\psi(t)=\varphi_A(t)/(2a)$.
\end{remark}

%========================================================================================%
\textbf{Acknowledgement.} The authors would like to sincerely thank
Professor T. Ando for very useful comments improving the paper.
%========================================================================================%

\end{document}